\newtheorem{theorem}{Theorem}
\newtheorem*{lemma*}{Claim}
\newtheorem{lemma}[theorem]{Lemma}
\newtheorem{Remark}[theorem]{Remark}
\numberwithin{theorem}{section}
\numberwithin{equation}{section}
\title[Rainbow Cliques]{More on Rainbow Cliques in Edge-Colored Graphs}
\author{Xiao-Chuan Liu}
\address[Liu]{Instituto de Matemática da Universidade Federal de Alagoas,
	Av. Lourival Melo Mota, S/N, Maceió, Brasil}
\email{lxc1984@gmail.com}
\author{Danni Peng}
\address[Peng]{Instituto de Matemática Pura e Aplicada,
Estrada Dona Castorina 110, Jardim Botânico, Rio de Janeiro, 22460-320, Brasil}
\email{dannipeng49@gmail.com}
\author{Xu Yang}
\address[Yang]{Instituto de Computação da Universidade Federal de Alagoas,
	Av. Lourival Melo Mota, S/N, Maceió, Brasil}
\email{yang@ic.ufal.br}
\begin{document}
\maketitle{}
\begin{abstract}
In an edge-colored graph $G$, a rainbow clique $K_k$ is a $k$-complete subgraph in which all the edges have distinct colors. Let $e(G)$ and $c(G)$ be the number of edges and colors in $G$, respectively. In this paper, we show that for any $\varepsilon>0$,
if $e(G)+c(G) \geq (1+\frac{k-3}{k-2}+2\varepsilon) {n\choose 2}$ and $k\geq 3$, then for sufficiently large $n$, the number of rainbow cliques $K_k$ in $G$ is $\Omega(n^k)$.

We also characterize the extremal graphs $G$ without a rainbow clique $K_k$, for $k=4,5$, when $e(G)+c(G)$ is maximum.

Our results not only address existing questions but also complete the findings of Ehard and Mohr (Ehard and Mohr, Rainbow triangles and cliques in edge-colored graphs. {\it European Journal of Combinatorics, 84:103037,2020}).

\end{abstract}

\section{Introduction}
Mantel's Theorem (see~\cite{mantel1907vraagstuk},1907) and the subsequent renowned Tur\'an's Theorem (see~\cite{turan1941extremalaufgabe}, 1941) initiated the field of extremal graph theory. Tur\'an's Theorem states that for any graph $G$ with $n$ vertices, if the number of edges exceeds the extremal number $t_{n,k-1}$ (explained in the following paragraphs), the graph must contain a clique $K_k$ as a subgraph. Notably, Erd\H{o}s and Simonovits (in~\cite{erdHos1983supersaturated}, 1983) later observed that if a graph contains $\Omega (n^2)$ more edges than the aforementioned extremal number, then the number of clique $K_k$ contained in $G$ is at least $\Omega (n^2)$. This insight was further extended to encompass all graphs and hypergraphs. This crucial discovery, later dubbed the “supersaturation phenomenon" has significantly influenced contemporary research in extremal combinatorics. For instance, the recent breakthrough known as the  "Hypergraph Containers" method finds its applications mainly when the supersaturation results are applicable. For a deeper understanding, consult the papers and  surveys~\cite{saxton2015hypergraph},~\cite{balogh2015independent}, and~\cite{balogh2018method}.

In this paper, we direct our attention to the colored version of extremal problems.
Consider an edge-colored graph $G$, where we denote  $e(G)$ and $c(G)$ the number of edges and colors in $G$, respectively. A {\it rainbow clique $K_k$} within $G$ refers to $k$-complete subgraph where all the edge colors are distinct. In 2014, Li, Ning, Xu and Zhang \cite{li2014rainbow} established specific sufficient conditions for the presence of a rainbow triangle in terms of the sum of the number of colors and edges within the host graph $G$. One of these conditions is stated as follows.
\begin{theorem}\label{rainbow_triangle}(\cite{li2014rainbow})
Let $G$ be a colored graph on $n$ vertices satisfying $e(G)+c(G)\geq {n \choose 2}+n$. Then $G$ contains a rainbow triangle.
\end{theorem}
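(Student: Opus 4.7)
I would proceed by induction on $n$. The base case $n=3$ is immediate: the bound $c(G)\le e(G)\le \binom{3}{2}=3$ combined with $e(G)+c(G)\ge 6$ forces $e(G)=c(G)=3$, so $G$ is a triangle with three distinct edge colors, which is already a rainbow triangle.

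\textbf{Inductive step.} Assume the statement for $n-1$ and suppose, for contradiction, that $G$ on $n$ vertices with $e(G)+c(G)\ge \binom{n}{2}+n$ has no rainbow triangle. For each vertex $v$, define the \emph{deletion loss}
\[
L(v) := d(v) + \bigl(c(G)-c(G-v)\bigr).
\]
Because $\binom{n}{2}-\binom{n-1}{2}=n-1$, any $v$ with $L(v)\le n$ satisfies $e(G-v)+c(G-v)\ge \binom{n-1}{2}+(n-1)$, and the induction hypothesis produces a rainbow triangle in $G-v\subseteq G$, contradicting our assumption. Thus the whole task reduces to finding a vertex with $L(v)\le n$.

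\textbf{Finding $v$.} The quantity $c(G)-c(G-v)$ counts colors $\alpha$ whose entire class $E_\alpha$ is incident to $v$; equivalently, $E_\alpha$ is a star centered at $v$ (possibly a single edge). Since distinct such colors are witnessed by distinct edges at $v$, we have $c(G)-c(G-v)\le d(v)$, and hence $L(v)\le 2d(v)$. A vertex of minimum degree at most $n/2$ therefore already satisfies $L(v)\le n$, and the induction closes. The nontrivial case is $\delta(G)>n/2$, where $G$ is dense and this simple bound fails. In that regime I would rely on the global identity $\sum_{v\in V(G)}(c(G)-c(G-v))=\sum_\alpha s_\alpha$, where $s_\alpha\in\{0,1,2\}$ and $s_\alpha=2$ only when $|E_\alpha|=1$, so that the no-rainbow-triangle hypothesis must be invoked to rule out the extremal configurations.

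\textbf{Main obstacle.} The dense case $\delta(G)>n/2$ is the real challenge. My plan is to assume $L(v)>n$ for every vertex and derive a contradiction: then every vertex would be the center of many private star-colors, and the large common neighborhoods forced by $\delta(G)>n/2$ should allow two privately colored edges meeting at a common neighbor to close into a rainbow triangle. A parallel route is to pass to a maximum rainbow subgraph $H\subseteq G$ with $|E(H)|=c(G)$, which is necessarily triangle-free (any triangle in $H$ is rainbow in $G$), so Mantel's theorem yields $c(G)\le \lfloor n^2/4\rfloor$; controlling how the duplicate-colored edges of $G\setminus H$ attach at each vertex should push $\min_v L(v)$ below $n$. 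The extremal construction---$K_n$ coloured so that every still-uncolored edge at vertex $i$ receives colour $i$, giving $e+c=\binom{n}{2}+n-1$---shows the bound is tight and pinpoints which inequality must be sharp.
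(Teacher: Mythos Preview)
The paper does not prove this theorem; it is quoted from Li--Ning--Xu--Zhang \cite{li2014rainbow} and used as a black box. There is thus no in-paper proof to compare against, though the Appendix proof of Lemma~\ref{noK3} (the extremal characterisation at $e+c=\binom{n}{2}+n-1$) follows exactly your induction-plus-averaging template with $L(v)=d(v)+d^s(v)$.

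Your proposal contains every ingredient needed but stops short of assembling them, and the case split on $\delta(G)$ is a red herring. The two ``routes'' you sketch in the final paragraph are not alternatives---together they \emph{are} the proof. Work in the contrapositive: assume $G$ has no rainbow triangle and show $e(G)+c(G)\le\binom{n}{2}+n-1$ by induction on $n$. For every $v$ the induction hypothesis gives $e(G-v)+c(G-v)\le\binom{n-1}{2}+n-2$, hence
\[
e(G)+c(G)\ \le\ \binom{n-1}{2}+n-2+\min_v L(v).
\]
Your identity $\sum_v L(v)=2e(G)+\sum_\alpha s_\alpha\le 2e(G)+2c(G)$ yields $\min_v L(v)\le 2(e+c)/n$, and substituting gives $(e+c)\frac{n-2}{n}\le\frac{(n-2)(n+1)}{2}$, i.e.\ $e+c\le\binom{n}{2}+n$. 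Equality here forces $\sum_\alpha s_\alpha=2c(G)$, so every colour class is a single edge and $G$ is rainbow; but then ``no rainbow triangle'' means ``triangle-free'', and Mantel gives $e+c=2e\le n^2/2<\binom{n}{2}+n$, a contradiction. Hence $e+c\le\binom{n}{2}+n-1$, completing the induction. Your Mantel observation is not a parallel route at all---it is exactly the one line that rules out the equality case and closes the argument, with no separate treatment of the dense regime required.
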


 Later in 2016, Xu, Hu, Wang and Zhang \cite{xu2016rainbow} extended the condition for the existence of a rainbow clique $K_k$, where $k\geq 4$. Let $T_{n,k}$ represent the Turán graph, defined as a complete $k$-partite graph with $n$ vertices, ensuring the parts have the most equal sizes possible. This construction guarantees a unique graph up to isomorphism. We denote $t_{n,k}$ as the number of edges of $T_{n,k}$, also known as the  Turán number. The generalized condition can be concisely expressed as follows:

\begin{theorem}\label{ExistClique}(\cite{xu2016rainbow})
Let $G$ be an edge-colored graph on $n$ vertices satisfying $e(G)+c(G)\geq {n\choose 2}+t_{n,k-2}+2$, then $G$ contains a rainbow clique $K_k$, where $k\geq 4$.
\end{theorem}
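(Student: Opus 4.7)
My plan is to induct on $k \ge 4$, treating $k=4$ as the base and reducing $k\ge 5$ to $k-1$. For $k=4$, the hypothesis $R(G):=e(G)+c(G)\ge \binom{n}{2}+\lfloor n^2/4\rfloor+2$ comfortably exceeds the rainbow-triangle threshold $\binom{n}{2}+n$ of Theorem~\ref{rainbow_triangle} for large $n$, and for $k\ge 5$ the inequality $t_{n,k-2}>t_{n,k-3}$ makes the hypothesis stronger than the inductive assumption, so $G$ contains a rainbow $K_{k-1}$ in both cases. To orient the argument I would first confirm sharpness via the construction that takes the Tur\'an graph $T_{n,k-2}$ with each of its $t_{n,k-2}$ edges coloured by a distinct new colour, and then adds all missing within-part edges under one additional common colour; this yields $R(G)=\binom{n}{2}+t_{n,k-2}+1$ and admits no rainbow $K_k$, because any $k$ vertices place at least two into one of the $k-2$ parts and those in-part edges share the extra colour. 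So the ``$+2$" in the hypothesis is precisely one unit beyond extremal and must be consumed tightly.

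The core is the \emph{extension step}: given a rainbow $K_{k-1}$ on vertex set $S=\{u_1,\ldots,u_{k-1}\}$ with colour set $C_S$, find $v\notin S$ such that (i)~each $vu_i$ is an edge, (ii)~the colours $\{c(vu_i)\}_{i}$ are pairwise distinct, and (iii)~$\{c(vu_i)\}_{i}\cap C_S=\emptyset$. I would count pairs $(S,v)$ where $S$ is a rainbow $K_{k-1}$ and $v$ is an extending vertex; averaging over all rainbow $K_{k-1}$'s in $G$ and summing forbidden contributions (missing edges $vu_i$, colour repetitions among the $c(vu_i)$, collisions with $C_S$) over all candidate $v$, each forbidden contribution should be chargeable against a deficit in $\binom{n}{2}-e(G)$ or in a suitable colour-count, so that the hypothesis $R(G)\ge\binom{n}{2}+t_{n,k-2}+2$ strictly rules out the no-extension case.

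The main obstacle is that fixing a \emph{single} rainbow $K_{k-1}$ and applying a naive pigeonhole is too weak: even one missing edge or colour repetition per $u_i$ is enough to kill every candidate $v$, whereas $R(G)$ only controls global quantities. I expect the proof to need either (a)~a stability result, showing that absence of a rainbow $K_k$ forces $G$ to be structurally close to the extremal construction above, whence the strict inequality is violated; or (b)~an auxiliary induction on $n$, where a vertex $v$ minimising $d(v)+m(v)$ (with $m(v)$ the number of colours appearing only on edges incident to $v$) is removed and one verifies $d(v)+m(v)\le (n-1)+(t_{n,k-2}-t_{n-1,k-2})$ so that $G-v$ still satisfies the inductive hypothesis. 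Reconciling per-vertex losses with the Tur\'an-threshold increment is where I expect the argument to be most delicate, and the ``$+2$" slack in the hypothesis will ultimately have to be absorbed at exactly this point.
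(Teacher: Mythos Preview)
The paper does not prove this theorem; it is quoted from Xu, Hu, Wang and Zhang~\cite{xu2016rainbow} and used throughout as a black box. So there is no in-paper proof to compare your attempt against.

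On the proposal itself: the ``extend a fixed rainbow $K_{k-1}$ by one vertex'' line cannot be made to work at the level of a single $S$, for exactly the reason you identify---global control of $e(G)+c(G)$ says nothing about the local obstructions around one particular clique, and averaging over all rainbow $K_{k-1}$'s does not rescue this because you have no lower bound on how many rainbow $K_{k-1}$'s there are (possibly just one), nor any mechanism to charge failed extensions against $\binom{n}{2}-e(G)$. So the counting-pairs idea and option~(a) are genuinely incomplete, not merely unpolished.

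Your option~(b), induction on $n$ by removing a vertex minimising $d(v)+m(v)$, is the correct route and is essentially how~\cite{xu2016rainbow} proceeds; your $m(v)$ is exactly the paper's saturated degree $d^s(v)$, and the identity $e(G)+c(G)=e(G-v)+c(G-v)+d(v)+d^s(v)$ together with $\sum_v(d(v)+d^s(v))\le 2e(G)+2c(G)$ is the engine. In the contrapositive, if $G$ has no rainbow $K_k$ then neither does any $G-v$, so induction gives $d(v)+d^s(v)\ge (e(G)+c(G))-\binom{n-1}{2}-t_{n-1,k-2}-1$ for every $v$; summing yields
\[
(n-2)\bigl(e(G)+c(G)\bigr)\le n\Bigl(\tbinom{n-1}{2}+t_{n-1,k-2}+1\Bigr),
\]
and a short calculation with the Tur\'an-number identities from Section~2 shows the right-hand side is below $(n-2)\bigl(\binom{n}{2}+t_{n,k-2}+2\bigr)$ once $n$ is past a small threshold. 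Your sketch correctly locates this mechanism but stops before carrying out the arithmetic and before handling the base cases in $n$, which is precisely where the ``$+2$'' is consumed; without those pieces the proposal remains a plan rather than a proof.
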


It is natural to consider the subsequent problem of demonstrating an abundance of rainbow cliques in the spirit of Erd\H{o}s and Simonovits.
Our first result, stated below, offers a supersaturation theorem in this context.
 Assuming the common criterion of substantial  $e(G)+c(G)$ values, the outcome showcases the existence of $\Omega(n^k)$ rainbow cliques.


\begin{theorem} \label{main1}
Let $G$ be an edge-colored graph on $n$ vertices.  For any $\varepsilon>0$, there exists some $\delta>0$ such that if $e(G)+c(G) \geq (1+\frac{k-3}{k-2}+2\varepsilon) {n\choose 2}$ and $k\geq 3$, then for sufficiently large $n$, $G$ contains at least $\delta n^k$ rainbow cliques $K_k$. 
\end{theorem}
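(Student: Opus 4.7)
My plan is to obtain supersaturation from the existence result (Theorem~\ref{ExistClique}) via a random sampling argument, in the spirit of Erd\H os--Simonovits, with one twist to handle colors.

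\textbf{Step 1: calibrating the sample size.} Since $t_{m,k-2}/\binom{m}{2}\to \frac{k-3}{k-2}$ as $m\to\infty$, I fix a constant $m=m(\varepsilon,k)$ large enough that $t_{m,k-2}+2 \leq \left(\frac{k-3}{k-2}+\varepsilon\right)\binom{m}{2}$. Then for any edge-colored $m$-vertex graph $H$ with
$$e(H)+c(H)\geq \left(1+\frac{k-3}{k-2}+\varepsilon\right)\binom{m}{2},$$
Theorem~\ref{ExistClique} produces a rainbow $K_k$ in $H$.

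\textbf{Step 2: sampling and the rainbow-representative trick.} Let $S$ be a uniformly random $m$-subset of $V(G)$. The expected number of edges is $e(G)\binom{m}{2}/\binom{n}{2}$. For colors, pick one representative edge per color class, forming a rainbow spanning subgraph $R\subseteq G$ with $|E(R)|=c(G)$. Every edge of $R$ with both endpoints in $S$ contributes a distinct color to $G[S]$, so $c(G[S])\geq |E(R)\cap\binom{S}{2}|$, and hence
$$\mathbb{E}\bigl[e(G[S])+c(G[S])\bigr]\;\geq\;\bigl(e(G)+c(G)\bigr)\frac{\binom{m}{2}}{\binom{n}{2}}\;\geq\;\left(1+\frac{k-3}{k-2}+2\varepsilon\right)\binom{m}{2}.$$

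\textbf{Step 3: reverse Markov and double counting.} Since $e(G[S])+c(G[S])\leq 2\binom{m}{2}$ always, a reverse Markov argument gives a constant $\gamma=\gamma(\varepsilon)>0$ such that at least $\gamma\binom{n}{m}$ of the $m$-subsets $S$ satisfy $e(G[S])+c(G[S])\geq (1+\frac{k-3}{k-2}+\varepsilon)\binom{m}{2}$. By Step~1, each such $S$ contains a rainbow $K_k$. Since every rainbow $K_k\subseteq G$ lies in exactly $\binom{n-k}{m-k}$ many $m$-subsets, the number of rainbow $K_k$'s in $G$ is at least
$$\gamma\,\binom{n}{m}\Big/\binom{n-k}{m-k}\;=\;\frac{\gamma}{\binom{m}{k}}\binom{n}{k}\;=\;\Omega(n^k),$$
as desired.

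\textbf{Main obstacle.} The nontrivial point is that $c(G[S])$ is not a linear statistic of $G$, so its expectation under subsampling is not obvious. The rainbow-representative subgraph $R$ sidesteps this: it converts the color count into an edge count of a fixed auxiliary graph, whose intersection with a random $S$ linearizes cleanly. Everything else---choosing $m$ so Theorem~\ref{ExistClique} applies, the reverse Markov inequality, and the standard double counting---is routine once this point is handled.
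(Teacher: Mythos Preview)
Your proof is correct and follows essentially the same Varnavides-type averaging argument as the paper: the paper sums $e(M)+c(M)$ over all $m$-subsets and observes that each color is counted at least $\binom{n-2}{m-2}$ times (which is exactly your rainbow-representative trick stated in summation rather than probabilistic language), then uses the same good/bad split and double count to finish. The paper's bound $\eta \geq \varepsilon/(1-\frac{k-3}{k-2}-\varepsilon)$ is precisely what your reverse Markov inequality yields.
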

This result demonstrates a notable enhancement over  Theorem 4 of the paper~\cite{li2021counting}, which specifically addressed the case of $k=3$. Additionally, it is worth noting that ~\cite{li2021counting} required a more stringent condition, namely, a large minimum color degree of the graph.
 Our proof introduces a modification of the classical method known as the Varnavides averaging argument (refer to~ \cite{varnavides1959certain}).

It is also possible to describe extremal colored graphs for this extremal problem. The paper~\cite{fujita2019sufficient} offers insights into characterizing extremal graphs $G$ that lack rainbow triangles when $e(G)+c(G)$ reaches its maximum. Additionally, in~\cite{ehard2020rainbow}, the authors characterized the edge-colored graphs without rainbow $K_k$ for $k\geq 6$ that maximize the sum of edge number and color number. Specifically, if a graph $G$ on $n$ vertices satisfies $e(G)+c(G)={n\choose 2}+t_{n,k-2}+1$ and does not contain a rainbow $K_k$, then $G$ is complete and contains a rainbow $T_{n,k-2}$.
The graph $G$ can be obtained as follows, up to isomorphism.
\begin{enumerate}
\item Firstly, divide $n$ vertices into $k-2$ parts as evenly as possible.
\item Then, assign pairwise distinct colors to all the edges between different parts, and assign all the edges within each part a new color.
\end{enumerate}
This construction yields the desired extremal graphs under the given conditions.

In the same paper~\cite{ehard2020rainbow}, the cases $k=4$ and $k=5$ were also considered, but were regarded as having "little hope" for arriving at a nice characterization.

 In this paper, we complete this puzzle by characterizing the extremal graphs that do not have a rainbow clique $K_k$, for $k=4$ and 5. We demonstrate that when $n$ is slightly larger than $k$, the extremal graphs without a rainbow $K_4$ (or $K_5$) are also complete and have the same coloring pattern with the ones without rainbow $K_k$, $k\geq 6$.

\begin{theorem}\label{main2}
Let $G$ be an edge-colored graph with $n$ vertices. If $e(G)+c(G)={n\choose 2}+t_{n,k-2}+1$, where either $k=4$ and $n\geq 8$, or $k=5$ and $n\geq 9$, and $G$ does not have a rainbow $K_k$, then $G$ satisfies the following conditions:
\begin{enumerate}
\item $G$ is complete and
\item $G$ has a rainbow $T_{n,k-2}$ and all the edges within every part are colored alike with a new color.
\end{enumerate}
\end{theorem}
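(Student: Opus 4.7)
The plan is to adapt the framework of Ehard and Mohr for $k \geq 6$ in \cite{ehard2020rainbow} to the small-$k$ regime, establishing in turn that $G$ is complete, that $G$ contains a rainbow $T_{n, k-2}$, and that all edges within any part of this Turán graph share a common color.

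For completeness, I would argue by contradiction: if $uv$ is a non-edge of $G$, then for any color $c$ already appearing in $G$, the graph $G'$ obtained by adding $uv$ with color $c$ satisfies $e(G') + c(G') \geq \binom{n}{2} + t_{n, k-2} + 2$, so contains a rainbow $K_k$ by Theorem \ref{ExistClique}; this rainbow $K_k$ must use $uv$ since $G$ itself has none. Aggregating the forced rainbow-$K_k$-minus-$uv$ configurations over many choices of $c$, together with the scarcity of repeated-color edges (only $\binom{n}{2} - t_{n, k-2} - 1$ excess beyond the color count), yields two rainbow $K_{k-1}$'s through $u$ and through $v$ sharing a rainbow $K_{k-2}$ on common neighbors $w_1, \dots, w_{k-2}$; using $n \geq 8$ (respectively $n \geq 9$) one can then find an additional vertex $x$ whose incident colors complete one of these to a rainbow $K_k$ already in $G$, the desired contradiction. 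To locate a rainbow $T_{n, k-2}$ inside the now-complete $G$, I pick one edge per color class to form a rainbow spanning subgraph $R$ of $K_n$ with $|E(R)| = t_{n, k-2} + 1$; since $G$ has no rainbow $K_k$, $R$ is $K_k$-free, and a Zykov-style symmetrization on the choice of representatives lets me assume $R$ contains $T_{n, k-2}$ as a subgraph.

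For the intra-part coloring, with parts $V_1, \dots, V_{k-2}$ of the rainbow $T_{n, k-2}$ identified, every intra-part edge must repeat some cross-edge color, and I argue that if two intra-part edges $xy, x'y'$ bear distinct colors then selecting one representative from each of the remaining parts, together with a careful swap involving $\{x, y, x', y'\}$, produces a rainbow $K_k$, contradicting the hypothesis. The main obstacle lies in the completeness step and in the Zykov symmetrization: because the bound $t_{n, k-2} + 1$ is far below the Turán number $t_{n, k-1}$, many $K_k$-free graphs on $n$ vertices with $t_{n, k-2} + 1$ edges fail to contain $T_{n, k-2}$, and ruling out each alternative near-extremal configuration requires a delicate case analysis that distinguishes the $k = 4, 5$ cases from the $k \geq 6$ treatment — precisely what made \cite{ehard2020rainbow} describe these cases as holding ``little hope'' for a direct attack.
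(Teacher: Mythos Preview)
Your proposal has genuine gaps in both of the steps you flag as the ``main obstacles,'' and neither is overcome by what you have written.

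For the completeness step, the device of inserting the non-edge $uv$ with an existing color $c$ does give, for each $c$, a rainbow $K_k$ in $G'$ through $uv$, hence a rainbow $K_k^-$ in $G$ spanning $\{u,v,w_1,\dots,w_{k-2}\}$ (for some $w_i$ depending on $c$). But your ``aggregation over many $c$'' never says what is being aggregated or why distinct choices of $c$ yield compatible configurations; in fact every single choice already gives the pair of rainbow $K_{k-1}$'s you describe, so the aggregation adds nothing. The decisive claim --- that one can find a further vertex $x$ completing one of these $K_{k-1}$'s to a rainbow $K_k$ inside $G$ --- is exactly the heart of the matter and is asserted without argument. It does not follow from $n\ge 8$ (resp.\ $9$) alone: a priori every candidate $x$ could repeat one of the $\binom{k-1}{2}$ colours already present.

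The Zykov-symmetrization idea for locating a rainbow $T_{n,k-2}$ is more seriously flawed. Picking one edge per colour class gives a rainbow, $K_k$-free subgraph $R$ with $t_{n,k-2}+1$ edges, but Zykov symmetrization moves edges to new positions that need not lie in the same colour class as the edge they replace; the output is no longer a system of colour representatives, so you cannot ``assume'' $R$ contains $T_{n,k-2}$ by changing representatives. And as you yourself note, a $K_k$-free graph with only $t_{n,k-2}+1$ edges need not contain $T_{n,k-2}$ at all, so the structural claim is false for generic $R$; it can only hold because of the very specific way $R$ arises from the colouring, which your argument never exploits. The paper avoids both difficulties entirely by an induction on $n$: Lemma~\ref{ResultFromLemma11} supplies a vertex $v$ with $e(G-v)+c(G-v)=\binom{n-1}{2}+t_{n-1,k-2}+1$, the induction hypothesis gives the full structure of $G-v$, and then $v$ is reinserted; the base cases $n=8$ (for $k=4$) and $n=9$ (for $k=5$) are handled by direct saturated-degree counting in Lemmas~\ref{BaseCaseLemma11a}--\ref{BaseCaseLemma10b}. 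None of this appears in your outline.
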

Our result is sharp in the following sense. One of the reasons why the cases $k=4$ and $k=5$ are considered difficult in the paper ~\cite{ehard2020rainbow} is that counterexamples can be found that do not satisfy the pattern described above. For instance, in~\cite{ehard2020rainbow}, an example for the case $k=5$ and $n=8$ was provided  (see Figure 3 in their paper). In Section 2 below, 
We will further provide an example of
$k=4$ and $n=7$. However, surprisingly, our theorem above shows that these counterexamples are the best that can be obtained.
\section{Preliminary}

In this paper, we focus exclusively on simple and undirected graphs. The edge coloring of graph $G$ is not necessarily proper. For any edge $e\in E(G)$, we define $c(e)=c$ to be the color of $e$. We define a color $c$ to be {\it saturated} at a vertex $v$ if there exists an edge $e$ adjacent to $v$ with $c(e)=c$, and the color $c$ does not appear in $G-v$.  The saturated degree of vertex $v$, denoted as $d_G^s(v)$, represents the number of colors that are saturated at $v$ in graph $G$. When there is no confusion, we can simply write $d^s(v)$ instead of $d_G^s(v)$. Let $d(v)$ be the degree of vertex $v$. It is evident that $d^s(v)\leq d(v)$ and $\sum_{v\in V(G)}d^s(v)\leq 2c(G)$.
We provide some estimates regarding the Turán number $t_{n,k}$ which will be needed in the proofs. Let $n=pk+i$, where $0\leq i\leq k-1$.
\begin{eqnarray}
t_{n,k}=\frac{(k-1)(n^2-i^2)}{2k} +\binom i2 \\
\frac{(k-1)n^2}{2k}-\frac{n}{4}\leq
t_{n,k}\leq \lfloor \frac{(k-1)n^2}{2k}\rfloor\\
t_{n+1,k}-t_{n.k}=n-\frac{n-i}{k}
\end{eqnarray}
If $G$ is a complete graph edge-colored by $t_{n,k-1}+1$ colors and does not contain a rainbow $K_k$,
according to Theorem~\ref{ExistClique}, we know that for any $v\in V(G)$,
$c(G-v)\leq t_{n-1,k-2}+1$ and $d^s(v)\geq t_{n,k-2}-t_{n-1,k-2}$.
In the case where $n$ is divisible by $k-2$,
the bounds of the summation of saturated degrees in $G$ become special.
Since $n(t_{n,k-2}-t_{n-1,k-2})=(k-3)n^2/(k-2)=2t_{n,k-2}$, we can deduce that
\begin{align}\label{sumofds2}
	& 2t_{n,k-1}+2=2c(G)\geq \sum_{v\in V(G)}d^s(v)
	\geq n(t_{n,k-2}-t_{n-1,k-2})
	=  2t_{n,k-2}.
\end{align}
We can also consider an alternative method for calculating the summation of saturated degrees in $G$. Let $c_t$ be the number of colors that saturated by $t$ vertices, for $t=0,1,2$. Then $2c_2+c_1=\sum_{v\in V(G)}d^s(v)$.

The first lemma we use several times is a property directly from the argument in Lemma 11 of \cite{ehard2020rainbow}.

\begin{lemma}(\cite{ehard2020rainbow}, Lemma 11)\label{ResultFromLemma11}
Let $G$ be an edge-colored graph on $n$ vertices such that $e(G)+c(G)={n\choose 2}+t_{n,k-2}+1$ and $G$ does not contain a rainbow $K_k$, for $n> k\geq 4$, then there is a vertex $v$ in $G$ such that $e(G-v)+c(G-v)={n-1\choose 2} +t_{n-1,k-2}+1$.
\end{lemma}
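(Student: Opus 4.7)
The plan is to study what happens when any single vertex is removed, and combine the upper bound from Theorem~\ref{ExistClique} with a double-count. For any $v \in V(G)$, the subgraph $G-v$ is still rainbow-$K_k$-free on $n-1$ vertices, so the contrapositive of Theorem~\ref{ExistClique} gives
\[
e(G-v)+c(G-v) \;\leq\; \binom{n-1}{2}+t_{n-1,k-2}+1.
\]
On the other hand, deleting $v$ removes exactly $d(v)$ edges and exactly $d^s(v)$ colors (the colors saturated at $v$ are precisely those that disappear), so
\[
e(G)+c(G) \;-\; \bigl(e(G-v)+c(G-v)\bigr) \;=\; d(v)+d^s(v).
\]
Combining the two displays with the hypothesis $e(G)+c(G)=\binom{n}{2}+t_{n,k-2}+1$ yields, for \emph{every} $v$,
\[
d(v)+d^s(v) \;\geq\; (n-1) + \bigl(t_{n,k-2}-t_{n-1,k-2}\bigr),
\]
and the conclusion of the lemma is precisely that equality holds for at least one $v$.

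I would argue by contradiction: assume the strict inequality $d(v)+d^s(v) \geq (n-1)+(t_{n,k-2}-t_{n-1,k-2})+1$ holds for \emph{every} vertex $v$. Summing over $v$ and using the identities $\sum_v d(v)=2e(G)$ together with $\sum_v d^s(v)\leq 2c(G)$ (both noted in the Preliminary), this becomes
\[
2\bigl(e(G)+c(G)\bigr) \;\geq\; n\bigl[(n-1)+(t_{n,k-2}-t_{n-1,k-2})+1\bigr].
\]
Plugging in the hypothesis and cancelling the $n(n-1)$ terms reduces the whole assumption to the single inequality
\[
2t_{n,k-2}+2 \;\geq\; n\bigl(t_{n,k-2}-t_{n-1,k-2}\bigr)+n.
\]

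It then remains to verify that this last inequality \emph{fails} whenever $n>k\geq 4$, which will deliver the required contradiction. This is a routine calculation using the Turán-number identities (2.1)--(2.3): writing $n=p(k-2)+i$ with $0\leq i\leq k-3$, formula (2.3) yields $t_{n,k-2}-t_{n-1,k-2}=n-p$ when $i=0$ and $n-p-1$ when $i\geq 1$. A direct substitution reduces the inequality to $n\leq 2$ in the first case, and to $i(p+1)\leq 2$ in the second; the only parameter values that satisfy either are $n\in\{1,2,k-1\}$, all excluded by the assumption $n>k$. I expect this closing case analysis to be the only technical obstacle, and it is essentially bookkeeping. The conceptual content of the argument is the one-line vertex-deletion identity, combined with the global inequality $\sum_v d^s(v)\leq 2c(G)$, which is just tight enough to force equality at some vertex.
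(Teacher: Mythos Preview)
The paper does not actually prove this lemma: it is stated with a citation to Ehard and Mohr (``a property directly from the argument in Lemma 11 of \cite{ehard2020rainbow}'') and no proof is given in the text. So there is no in-paper argument to compare against directly.

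That said, your argument is correct and is exactly the kind of averaging the paper itself uses elsewhere. The identity $e(G)-e(G-v)=d(v)$, $c(G)-c(G-v)=d^s(v)$, the upper bound on $e(G-v)+c(G-v)$ from Theorem~\ref{ExistClique}, and the global bound $\sum_v d^s(v)\le 2c(G)$ are precisely the ingredients used in the paper's Appendix proof of the Claim inside Lemma~\ref{noK3}, and in the discussion leading to inequality~(\ref{sumofds2}). Your closing arithmetic is also right: with $n=p(k-2)+i$, the assumed strict inequality forces $n\le 2$ when $i=0$ and $i(p+1)\le 2$ when $i\ge 1$, and every solution gives $n\le k-1$, contradicting $n>k$. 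So your reconstruction is sound and in the same spirit as the cited source and the paper's own techniques.
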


The following lemma can be found in \cite{fujita2019sufficient}, although here we consider a slightly different scenario. The proof is provided in Appendix.

\begin{lemma}\label{noK3}
If $e(G)+c(G)=\frac{n(n+1)}{2}-1$ and $G$ does not contain a rainbow triangle, monochromatic $C_3$ or $P_4$, then $G$ is a complete graph $K_n$ and $c(ij)=i$, where $ij=ji\in E(G)$, with $i<j$.
\end{lemma}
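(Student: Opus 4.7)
My plan is to induct on $n$ and, at each inductive step, to isolate a ``hub'' vertex $v_0$ that is adjacent to all other vertices and whose $n-1$ incident edges all carry a single color $c_0$ that appears nowhere else in $G$. Given such a $v_0$, deleting it removes exactly $n-1$ edges and the one color $c_0$, so $e(G-v_0)+c(G-v_0)=\binom{n-1}{2}+(n-1)-1$; since the three forbidden structures are hereditary, I would apply the inductive hypothesis to $G-v_0$ to obtain a labeling of its vertices by $\{2,\dots,n\}$ with $c(ij)=i$ for $2\le i<j\le n$, and then assign $v_0$ the label $1$ and $c_0$ the name of color $1$ to extend to the required labeling of $G$. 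The base cases $n=2$ and $n=3$ can be handled by direct enumeration.

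To locate $v_0$, I would use two structural observations. First, since $G$ has no monochromatic $C_3$ or $P_4$, every color class is $\{C_3,P_4\}$-free, and the classical characterization of such graphs shows each color to be a disjoint union of stars. Second, assuming that $G$ is complete, I would invoke Gallai's partition theorem on the rainbow-$K_3$-free coloring of $K_n$ to obtain a partition $V(G)=B_1\cup\cdots\cup B_m$ with $m\ge 2$, only two colors $\alpha,\beta$ between blocks, and each pair of blocks monochromatic. The restriction of each such color to any block pair is the complete bipartite graph $K_{|B_i|,|B_j|}$, and $K_{a,b}$ is a disjoint union of stars only when $\min(a,b)=1$; chasing this constraint through the partition forces some block to be a singleton $\{v_0\}$ whose incident edges form a monochromatic spanning star of some color $c_0$. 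Moreover, a second $c_0$-edge outside this spanning star would, together with two of the star's leaves, produce a monochromatic $C_3$ or $P_4$, so $c_0$ must appear only at $v_0$.

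The main obstacle is to show that $G$ is complete in the first place. My approach is by contradiction: assume $uv\notin E(G)$ and set $W=N(u)\cap N(v)$. I aim to adjoin $uv$ with a carefully chosen color so that the resulting graph remains rainbow-$K_3$-free, monochromatic-$C_3$-free and monochromatic-$P_4$-free while satisfying $e+c\ge\binom{n}{2}+n$, contradicting Theorem~\ref{rainbow_triangle}. When $W=\emptyset$, or when $c(uw)=c(vw)$ for every $w\in W$, a brand-new color for $uv$ evidently suffices, since no triangle through $uv$ acquires three distinct colors and the new color (on a single edge) cannot host any monochromatic $C_3$ or $P_4$. Otherwise there is $w_0\in W$ with $c(uw_0)\ne c(vw_0)$, and I must reuse an existing color for $uv$; the star-forest structure at $u$ and at $v$, combined with the absence of monochromatic $C_3$ and $P_4$, should sufficiently restrict the colors on the edges at $u$ and $v$ that a valid choice can always be made. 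The delicate case analysis here, which invokes all three forbidden structures simultaneously, is where I expect the bulk of the technical work to lie.
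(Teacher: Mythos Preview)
Your route is genuinely different from the paper's and considerably heavier. The paper never proves completeness as a separate step and never invokes Gallai. Instead it uses a one-line averaging argument: from $\sum_{v}\bigl(d(v)+d^s(v)\bigr)\le 2e(G)+2c(G)=n(n+1)-2$ one finds a vertex $v$ with $d(v)+d^s(v)\le n$; deleting it gives $e(G-v)+c(G-v)\ge\tfrac{(n-1)n}{2}-1$, with equality forced by Theorem~\ref{rainbow_triangle}, so the induction hypothesis applies directly to $G-v$. Adding $v$ back, one has $d(v)+d^s(v)=n$, and $d^s(v)\ge 2$ is impossible because two edges at $v$ in new colours, together with the (now known to exist) edge of $G-v$ between their other endpoints, would form a rainbow triangle; hence $d^s(v)=1$, $d(v)=n-1$, and $G$ is complete \emph{a posteriori}. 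A short case check then places $v$ in the labelling. The hub vertex you are looking for is exactly this $v$, but the paper locates it by counting rather than by structure theory.

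Two places in your plan need tightening. First, in the edge-addition step you only need the augmented graph to be rainbow-$K_3$-free to contradict Theorem~\ref{rainbow_triangle}; insisting on preserving the monochromatic-$C_3$ and $P_4$ bans as well is unnecessary and makes the case analysis harder than it needs to be. Even with that simplification, the situation where several common neighbours $w$ have $\{c(uw),c(vw)\}$ pairwise intersecting but with empty common intersection is not obviously excluded by the star-forest structure alone; ruling it out seems to require looking at edges among the $w$'s, whose existence is exactly what you are trying to establish. Second, Gallai's partition only guarantees \emph{two} inter-block colours, so a singleton block need not carry a monochromatic star a priori; you must argue further that the $P_4$-ban allows at most one singleton block per inter-block colour when some block has size $\ge 2$ (forcing $m\le 3$), and that $c(G)=n-1$ rules out the all-singleton case for $n\ge 4$. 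These gaps look closable, but the paper's averaging shortcut sidesteps all of them in two lines.
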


\begin{Remark}
Note that if $c(K_4)=3$, there are only two possibilities of an edge-colored $K_4$ with no rainbow triangle. One possibility is  as described in Lemma \ref{noK3}. The other possibility is that there is a monochromatic $C_4$, and the other two edges are colored with two new distinct colors.
\end{Remark}





\begin{lemma}\label{K6-}
Let $K_6^-$ be an edge-colored graph obtained by deleting an edge from $K_6$. Let the vertex set be $V_1\cup V_2$, where $V_1=\{v_1, v_2\}$ contains the two non-adjacent vertices with degree 4, and $K_6^-[V_2]=K_4$. If $c(K_6^-)=11$ and $K_6^-$ has no rainbow $K_4$, then there is rainbow $K_{2,4}$ in $K_6^-$, and either $K_6^-[V_2]$ has neither a rainbow triangle nor a monochromatic $C_3$ and $P_4$, or $K_6^-[V_2]$ has a monochromatic $C_4$.
\end{lemma}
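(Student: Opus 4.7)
The plan is to pin down the saturated degrees of $v_1$ and $v_2$ by applying Theorem \ref{ExistClique} to the two $K_5$'s $K_6^- - v_1$ and $K_6^- - v_2$. With $n=5,k=4$ the threshold is $\binom{5}{2}+t_{5,2}+2=18$, and $e(K_5)=10$; so if $c(K_6^- - v_i)\ge 8$ then $K_6^- - v_i$ would already contain a rainbow $K_4$, contradicting the hypothesis. Hence $c(K_6^- - v_i)\le 7$ for $i=1,2$. Combining with $c(K_6^-)=11$ and the identity $c(G)-c(G-v)=d^s(v)$, we obtain $d^s(v_i)\ge 4$, and since $v_i$ has only four incident edges this forces equality. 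Consequently the four edges at $v_i$ carry four pairwise distinct colors, and each of those colors is absent from $K_6^- - v_i$.

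Applying this observation on both sides simultaneously, the four colors at $v_1$ do not appear on any edge of $K_6^- - v_1$, so in particular they are disjoint from the four colors at $v_2$, and both palettes are disjoint from the colors used inside $K_6^-[V_2]$. This already shows that the eight edges of the $K_{2,4}$ between $V_1$ and $V_2$ bear pairwise distinct colors, giving the desired rainbow $K_{2,4}$. Moreover $c(K_6^-[V_2])=11-8=3$.

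It remains to classify the coloring of the induced $K_4$ on $V_2$. I first rule out a rainbow triangle: if $abc\subset V_2$ were one, the three edges $ab,bc,ca$ would exhaust the three colors of $K_6^-[V_2]$, and adjoining $v_1$ would produce a $K_4$ whose three additional edges $v_1a,v_1b,v_1c$ carry three more distinct colors from the $v_1$-palette, making the $K_4$ rainbow — a contradiction. So $K_6^-[V_2]$ is a $K_4$ with exactly three colors and no rainbow triangle, and the Remark following Lemma \ref{noK3} lists only two such colorings: the pattern of Lemma \ref{noK3} (which contains neither a monochromatic $C_3$ nor a monochromatic $P_4$) and the pattern with a monochromatic $C_4$ plus two new diagonal colors. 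These are exactly the two alternatives stated in the conclusion. The substantive step is the saturated-degree forcing in the first paragraph; once that is in place, the rainbow-$K_{2,4}$ claim and the structural dichotomy fall out by direct inspection.
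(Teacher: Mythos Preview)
Your proof is correct and follows essentially the same route as the paper: apply Theorem~\ref{ExistClique} to $K_6^--v_i$ to force $d^s(v_1)=d^s(v_2)=4$, deduce the rainbow $K_{2,4}$ and that $K_6^-[V_2]$ carries three fresh colors, exclude a rainbow triangle by adjoining $v_1$, and finish with the Remark after Lemma~\ref{noK3}. The only cosmetic difference is that the paper phrases the first step as $d(v)+d^s(v)\ge 8$ for every vertex, while you specialize directly to $v_1,v_2$; the content is identical.
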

\begin{proof}
Since $K_6^-$ does not contain a rainbow $K_4$, if follows that for any vertex $v$ in $K_6^-$, $d(v)+d^s(v)\geq 8$. This is because if $e(K_6^--v)+c(K_6^--v)\geq 18$ and by Theorem \ref{ExistClique}, there would be a rainbow $K_4$ in $K_6^--v$. By simple computation, for the vertices $v_1$ and $v_2$ with $d(v_1)=d(v_2)=4$, we have $d^s(v_1)=d^s(v_2)=4$. Therefore, we obtain a rainbow $K_{2,4}$ in $K_6^-$. Regarding $K_6^-[V_2]$, we have that $K_6^-[V_2]=K_4$ and $K_6^-[V_2]$ has three new colors other than the eight colors in the rainbow $K_{2,4}$. It is important to note that there is no rainbow triangle with at least two new colors in $K_6^-[V_2]$, because if there were,  there would be a rainbow $K_4$ in $K_6^-$. Consequently, there is no any rainbow triangle in $K_6^-[V_2]$. Based on the previous argument, there are only two ways of coloring $K_4$ such that there is no rainbow triangle.
\end{proof}

\begin{lemma}\label{K6_two_ways}
If $K_6$ is edge-colored with $10$ colors and does not contain a rainbow $K_4$, then either there is a rainbow $T_{6,2}$ as a subgraph, or there is a monochromatic $C_6$ as a subgraph.
\end{lemma}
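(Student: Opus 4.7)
The plan is to apply Theorem~\ref{ExistClique} to $K_6-v=K_5$ to pin down the color-class structure of $K_6$, and then run a short structural case analysis.

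For the first step, note that $t_{5,2}=6$ and $\binom{5}{2}+t_{5,2}+2=18$, so the absence of a rainbow $K_4$ in $K_5=K_6-v$ forces $c(K_6-v)\le 7$, i.e.\ $d^s(v)\ge 3$ for every vertex $v\in V(K_6)$. Summing yields $18\le\sum_v d^s(v)=c_1+2c_2\le 2c(G)=20$; combined with $c_0+c_1+c_2=10$ and the edge-count bound $c_2+2(c_0+c_1)\le 15$ (each multi-edge color class has at least two edges), this forces $c_2\ge 8$ and $c_0\in\{0,1\}$. Since a monochromatic star has at most $5$ edges, eliminating the impossible sub-configurations leaves exactly two scenarios for the multi-edge color classes: \textbf{(A)} two monochromatic stars whose sizes sum to $7$, each of size in $\{2,3,4,5\}$; or \textbf{(B)} a single non-star monochromatic subgraph $H$ with $|E(H)|=6$, the remaining nine edges each uniquely colored.

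Scenario (A) is ruled out immediately: since any vertex has degree only $5$, two stars of total size $7$ must have distinct centers $v_1\ne v_2$; then the $K_4$ induced on the four vertices $V(K_6)\setminus\{v_1,v_2\}$ has all six of its edges disjoint from both centers, hence outside both stars, so they receive six distinct uniquely-colored colors -- a rainbow $K_4$, contradicting the hypothesis. In scenario (B), a rainbow $K_4$ is produced whenever some $4$-subset $S\subseteq V(K_6)$ induces at most one $H$-edge (the other $\ge 5$ edges of $K_4[S]$ then carry distinct unique colors), so the task reduces to enumerating the possible shapes of a non-star $6$-edge graph on at most $6$ vertices -- the connected unicyclic graphs with cycle $C_k$, $k\in\{3,4,5,6\}$, and the disconnected shapes $2K_3$, $K_4+2K_1$, and $K_{2,3}+K_1$ -- and checking that every shape other than $C_6$ and $2K_3$ admits such an $S$. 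The two surviving shapes deliver the desired conclusions: $H=C_6$ is itself the monochromatic $6$-cycle, and for $H=2K_3$ the bipartition $A\sqcup B$ equal to the two triangle vertex-sets makes all nine crossing edges lie outside $H$, producing a rainbow $T_{6,2}$.

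The main obstacle is the case enumeration in scenario (B). A cleaner, brute-force-free formulation is that ``every $4$-subset of $V(K_6)$ induces at least two $H$-edges'' is equivalent to $\overline{H}$ (a $9$-edge graph on $6$ vertices) containing no subgraph on $4$ vertices with $\ge 5$ edges (i.e.\ no $K_4$ and no $K_4-e$); the only $9$-edge graphs on $6$ vertices with this property are $K_{3,3}$ and the triangular prism $\overline{C_6}$, corresponding precisely to $H=2K_3$ and $H=C_6$.
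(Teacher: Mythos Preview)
Your setup via the saturation count and the $(c_2,c_1,c_0)$ analysis matches the paper's, and your elimination of scenario~(A) (delete the two star centers) is exactly the paper's argument. The gap is in scenario~(B). Your list of non-star $6$-edge graphs on at most $6$ vertices is far from exhaustive: it omits, for instance, the bowtie plus an isolated vertex, $K_4^-$ with a pendant edge plus an isolated vertex, $\theta$-graphs with pendants, a triangle with an attached matching edge, and many others. Your alternative complement formulation is a correct restatement, but you only \emph{assert} that $K_{3,3}$ and the prism are the unique $9$-edge graphs on six vertices with no $K_4^-$; establishing this still requires a proof (in effect, showing the complement has minimum degree $\ge 2$, hence is $2$-regular), so the reformulation does not actually save the enumeration.

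The paper short-circuits all of this with an observation you already have the ingredients for. In scenario~(B) one has $(c_2,c_1,c_0)=(9,0,1)$, so $\sum_v d^s(v)=2c_2+c_1=18$; combined with your lower bound $d^s(v)\ge 3$ for each of the six vertices, equality is forced everywhere. Hence every vertex is incident to exactly $5-3=2$ edges of the repeated color, so $H$ is $2$-regular on six vertices and is therefore $C_6$ or $2C_3$. This delivers the two alternatives immediately, with no case enumeration at all.
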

\begin{proof}
By the discussion in the beginning of this section, we have that
\begin{equation}
	c_2+c_1+c_0=10\text{\ \ \ and\ \ \  } 18\leq 2c_2+c_1\leq 20
\end{equation}
The possible solutions of the above are
\begin{equation}
	(c_2,c_1,c_0)=(10,0,0), (9,1,0), (9,0,1), (8,2,0)
\end{equation}
Firstly, $(10,0,0)$ can be eliminated, because that means all the colors only appear once. Similarly, $(9,1,0)$ ($(8,2,0)$) can be eliminated, because if we delete one appropriate vertex (two vertices), then the remaining graph has a rainbow $K_4$. So there is only one solution which is $(9,0,1)$, that is 9 colors only appear once. On the other side, for any vertex $v\in K_6$, $d(v)+d^s(v)\geq 8$. If we delete the edges whose color only appears once, then every vertex in the remaining graph has degree at least 2. Therefore there are only two ways, either there are two monochromatic $C_3$ and $K_6$ has a rainbow $T_{6,2}$, or there is a monochromatic $C_6$.
\end{proof}








\begin{lemma}\label{BaseCaseLemma11a}
Let $G$ be an edge-colored graph on 8 vertices. If $e(G)+c(G)=45$ and $G$ does not contain a rainbow $K_4$, then $G$ is complete.
\end{lemma}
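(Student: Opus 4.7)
Suppose for contradiction that $G$ is not complete, and fix a missing edge $uv$.

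\emph{Minimum degree and initial reduction.} For each $w \in V(G)$, Theorem~\ref{ExistClique} applied to the $7$-vertex graph $G - w$ (which has no rainbow $K_4$) gives $e(G-w) + c(G-w) \leq \binom{7}{2} + t_{7,2} + 1 = 34$. Combined with $e(G) + c(G) = 45$, this forces $d(w) + d^s(w) \geq 11$, whence $d(w) \geq 6$. Hence $\bar G$ is a matching, $d(u) = d(v) = 6$, and $d^s(u), d^s(v) \geq 5$. Now put $H := G - u$; then $e(H) + c(H) = 39 - d^s(u) \leq 34$, with equality iff $d^s(u) = 5$, and in this equality case Lemma~\ref{ResultFromLemma11} (applicable since $n = 7 > k = 4$) supplies a vertex $w \in V(H)$ with $L := G - \{u, w\}$ satisfying $e(L) + c(L) = 25$.

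\emph{Pinning down $L$.} I would rule out $w = v$: since $uv$ is a non-edge, no color is saturated simultaneously at $u$ and $v$, so $c(G - \{u, v\}) \leq c(G) - d^s(u) - d^s(v) \leq 7 + m$, where $m = |\bar G|$; together with $e(G - \{u, v\}) = 16 - m$ this gives $e + c \leq 23 < 25$. Hence $v \in V(L)$, and since $u \notin V(L)$ while $\bar G$ is a matching, $L$ has at most one missing edge, i.e.\ $L \in \{K_6, K_6^-\}$. If $L = K_6$ then $c(L) = 10$ and Lemma~\ref{K6_two_ways} forces $L$ to contain a rainbow $T_{6,2}$ or a monochromatic $C_6$; if $L = K_6^-$ then $c(L) = 11$ and Lemma~\ref{K6-} determines $L$ up to a rainbow $K_{2,4}$ with prescribed coloring on the $K_4$-side.

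\emph{Extending to a rainbow $K_4$.} Since $v$'s only missing neighbor $u$ is outside $L$, $v$ is adjacent to all five other vertices of $L$; and as $d^s(v) \geq 5$, at most one of the six edges at $v$ is not saturated at $v$. The saturated $v$-edges carry pairwise distinct colors not present in $G - v$, and in particular not in $L$. In each structural case above I would exhibit a triangle $\Delta$ in $L$ on three vertices of $V(L) \setminus \{v\}$ whose edges carry three distinct colors, and such that all three $v$-edges into $V(\Delta)$ are saturated at $v$; then $\{v\} \cup V(\Delta)$ is a rainbow $K_4$ in $G$, contradicting the hypothesis. The subcase $d^s(u) \geq 6$ is easier: the six $u$-edges then carry six distinct saturated colors, and the same triangle argument applies with the roles of $u$ and $v$ swapped, using Lemma~\ref{ResultFromLemma11} applied to $G$ itself (rather than to $H$) to locate the extremal 6-vertex subgraph.

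\emph{Main obstacle.} I expect the final triangle-extraction step to be the technical heart. In the monochromatic $C_6$ case, rainbow triangles of $L$ arise precisely from vertex triples that skip two consecutive cycle edges; in the $K_6^-$ case the rainbow triangles are further restricted by Lemma~\ref{K6-}. A direct enumeration of the $\binom{5}{3} = 10$ triangles of $L \setminus \{v\}$, together with the observation that at most one $v$-edge is non-saturated, should confirm in every configuration that at least one good triangle—hence a rainbow $K_4$ extending it through $v$—is always available.
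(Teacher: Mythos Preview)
Your overall strategy---fix a missing edge $uv$, push to a $6$-vertex extremal subgraph $L$ via Lemma~\ref{ResultFromLemma11}, then extend a rainbow triangle of $L$ through $v$---is close in spirit to the paper's Case~4, but the argument has a genuine gap at the step where you assert $L\in\{K_6,K_6^-\}$.

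Your justification is ``since $u\notin V(L)$ while $\bar G$ is a matching.'' This only removes the non-edge $uv$ (and, if $d(w)=6$, the non-edge at $w$) from $\bar G$; it does \emph{not} bound the number of remaining non-edges by one. The matching $\bar G$ may have up to four edges, so after deleting $u$ and $w$ there can still be two disjoint non-edges inside $V(L)$, giving $L=K_6$ minus two disjoint edges with $e(L)=13$, $c(L)=12$. Nothing you wrote excludes this. (It can in fact be excluded, but by a separate argument: one checks that a $K_6$ minus two disjoint edges colored with $12$ colors always contains a rainbow $K_4$---the four surviving copies of $K_4$ share only the single edge $56$, so no single repeated color pair can hit them all. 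You would need to supply this.)

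The paper avoids this issue by taking the more pedestrian route: rather than invoking Lemma~\ref{ResultFromLemma11} on $G-u$ to get a new vertex $w$, it works directly with $G-\{u,v\}$ and runs an explicit case analysis on how many edges are missing there (zero through three), feeding each case into Lemma~\ref{K6_two_ways} or Lemma~\ref{K6-} after adding back dummy edges. Your route is cleaner when it works, but you have traded that case analysis for the unproven structural claim on $L$.

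Two smaller issues: (i) the sentence ``the saturated $v$-edges carry pairwise distinct colors'' need not hold when $d^s(v)=5$---a single saturated color could sit on two $v$-edges, and your triangle extraction must account for this; (ii) in the subcase $d^s(u)=6$, applying Lemma~\ref{ResultFromLemma11} to $G$ yields a $7$-vertex subgraph, not a $6$-vertex one, so your sketch there does not parse as written. The paper handles $d^s(v)=6$ in one line (its Case~2): since all six $v$-edges carry new distinct colors, any rainbow triangle in $G-\{u,v\}$ (which exists because $e+c\ge 21$) extends through $v$.
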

\begin{proof}
To prove that $G$ is  complete, we will eliminate the following cases and conclude that every vertex of $G$ has degree $7$.

{\bf Case 1}. there is a vertex $d(v)\leq 5$, hence  $d^s(v)\leq 5$.

We have $e(G-v)+c(G-v)\geq 35$. By Theorem \ref{ExistClique}, there is a rainbow $K_4$ in $G-v$.  Therefore all the vertices in $G$ have a degree of at least 6.

{\bf Case 2}. there is a vertex $v$ such that $d(v)=6$ and $d^s(v)=6$.

In this case, let $u$ be the vertex in $G$ that is not adjacent to $v$. Then we have $e(G-\{u,v\})+c(G-\{u,v\})\geq 45-12-12=21$, which indicates that there is a rainbow triangle in $G-\{u,v\}$ according to Theorem \ref{rainbow_triangle}. Therefore, when we consider $v$ along with it, there exists a rainbow $K_4$.

{\bf Case 3}. there is a vertex $d(v)=6$ and $d^s(v)\leq 4$.

In this case, we have $e(G-v)+c(G-v)\geq 35$, which indicates that there is a rainbow $K_4$ in $G-v$ according to Theorem \ref{ExistClique}.

{\bf Case 4}. there is a vertex $d(v)=6$, and $d^s(v)=5$.

In this case, we observe that $e(G-v)+c(G-v)=34$. Furthermore, there exists a vertex $u$ in $G$ that is not adjacent to $v$, with $d(u)=6$ and $d^s(u)=5$. When considering $G-v$, we have $d_{G-v}(u)=6$ and $d_{G-v}^s(u)\geq 5$.

If $d_{G-v}^s(u)=6$, in $G-\{u,v\}$, we have $e(G-\{u,v\})+c(G-\{u,v\})=22\geq 21$. This implies that there exists a rainbow triangle in $G-\{u,v\}$, and consequently, there is a rainbow $K_4$ in $G-v$.

If $d_{G-v}^s(u)=5$, then $e(G-\{u,v\})+c(G-\{u,v\})=23$. It is important to note that $G-\{u,v\}$ is a graph with 6 vertices and cannot be isomorphic to $K_6$. Alternatively, if $G-\{u,v\}$ is isomorphic to $K_6$, then $G-v$ would be isomorphic to $K_7$ and we would have $e(G-v)+c(G-v)=34$. By Lemma \ref{ResultFromLemma11}, there exists a vertex $w$ such that $e(G-\{v,w\})+c(G-\{v,w\})={6\choose 2}+t_{6,2}+1=25$, with $d_{G-v}(w)+d^s_{G-v}(w)=9$ implying $u\neq w$. According to Lemma \ref{K6_two_ways}, $K_6$ has two possible ways of coloring. In neither of these coloring schemes, we can find a vertex with $d^s_{G-\{v,w\}}(v')\geq 4$. This means that there is no vertex in $G-\{v,w\}$ that can be considered as $u$.

If $G-\{u,v\}$ is isomorphic to $K_6^-$, and thus $c(G-\{u,v\})=9$, we can add an edge in $G-\{u,v\}$ along with a new color. The resulting graph $G'$ satisfies $e(G')+c(G')=25$. According to Lemma \ref{K6_two_ways}, there are two ways to color $G'$. By deleting any edge whose color is unique in both ways of coloring, we obtain two distinct ways of coloring $G-\{u,v\}$. In either of the colored graph $G-\{u,v\}$, if we add back either $u$ or $v$, it can be easily verified that there exists a rainbow $K_4$ in $G-\{u\}$ or $G-\{u\}$.

If $G-\{u,v\}$ is isomorphic to $K_6$ with two missing edges, following a similar argument as above, we can add a new edge with a new color to $G-\{u,v\}$ and we obtain a new graph $G'$ that is isomorphic to $K_6^{-}$ and has $c(G')=11$. According to Lemma \ref{K6-}, there are two ways to color $G'$. It is straightforward to verify that by deleting an edge with a unique color and adding back either $u$ or $v$, there must be a rainbow $K_4$.

If $G-\{u,v\}$ is isomorphic to $K_6$ with three missing edges, we add back two edges with the same colors that already exist in $G-\{u,v\}$. We denote the resulting graph as $G'$. In $G'$, we delete two non-adjacent edges $e_1$ and $e_2$ such that $c(G')=c(G'-\{e_1,e_2\})=11$. It is straightforward to verify that $(G'-\{e_1,e_2\})\cup \{v\}$ must have a rainbow $K_4$.

Based on Case 1 mentioned earlier, $G-\{u,v\}$ cannot be isomorphic to $K_6$ with more than $3$ edges removed. Therefore, we can conclude the proof.
\end{proof}

\begin{Remark}\label{CounterExample}
In the case of $G$ being an edge-colored graph with 7 vertices, and satisfying $e(G)+c(G)=34$, we can provide an example where $G$ does not contain a rainbow $K_4$ and is not complete. Let $u$ and $v$ be two non-adjacent vertices with $d(u)=d(v)=d^s(u)=d^s(v)=5$. The graph $G-\{u,v\}$ is an edge-colored graph with no rainbow triangle, as shown in the example provided in Lemma \ref{noK3}.
\end{Remark}

\begin{lemma}\label{BaseCaseLemma11b}
Let $G$ be an edge-colored graph with 9 vertices. If $e(G)+c(G)=64$ and $G$ does not contain a rainbow $K_5$, then $G$ is complete.
\end{lemma}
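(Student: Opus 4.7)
The structure closely parallels Lemma~\ref{BaseCaseLemma11a}: the goal is to show that every vertex of $G$ has full degree $8$, which makes $G$ complete. By Theorem~\ref{ExistClique}, forcing a rainbow $K_5$ in the $8$-vertex graph $G-v$ requires $e(G-v)+c(G-v) \geq \binom{8}{2}+t_{8,3}+2 = 51$. Hence at every vertex $v$ we must have $d(v)+d^s(v) \geq 14$, and combined with $d^s(v)\leq d(v)\leq 8$ this immediately eliminates $d(v)\leq 6$ and forces $d^s(v)=7$ whenever $d(v)=7$.

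It remains to eliminate the borderline case $d(v)=d^s(v)=7$. Let $u$ be the unique non-neighbor of $v$. Then $d(u)\leq 7$, and the same inequality $d(u)+d^s(u)\geq 14$ forces $d(u)=d^s(u)=7$. The seven colors at $v$ are each saturated at $v$, hence live only on edges incident to $v$ and in particular do not occur at $u$; consequently the saturation status of every color at $u$ is unaffected by deleting $v$, so $d^s_{G-v}(u)=d^s_G(u)=7$. A direct count then yields
\[
e(G-\{u,v\})+c(G-\{u,v\}) \;=\; e(G)+c(G)-d(u)-d(v)-d^s_G(v)-d^s_{G-v}(u) \;=\; 64-28 \;=\; 36.
\]

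Since $36 \geq \binom{7}{2}+t_{7,2}+2 = 35$, Theorem~\ref{ExistClique} applied to the $7$-vertex graph $G-\{u,v\}$ produces a rainbow $K_4$, say on a vertex set $S$. Now $v$ is adjacent to every vertex of $S$ (since $u$ is the only non-neighbor of $v$), and the four edges from $v$ to $S$ carry four distinct colors, all of which are saturated at $v$; by definition of saturation these colors do not appear anywhere in $G-v$, and in particular not on any of the six edges of the rainbow $K_4$ on $S$. Adjoining $v$ therefore produces a rainbow $K_5$ in $G$, contradicting the hypothesis. This rules out the borderline case and establishes completeness. The only delicate point is the bookkeeping of saturated colors when passing from $G$ to $G-v$: it hinges on the observation that the extremal condition $d^s(v)=d(v)=7$ prevents any color from simultaneously occurring at $u$ and at $v$, which is precisely what makes this case much tighter than the $n=8$, $k=4$ analog handled in Lemma~\ref{BaseCaseLemma11a}.
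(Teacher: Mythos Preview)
Your proof is correct and follows essentially the same approach as the paper: use Theorem~\ref{ExistClique} to force $d(v)+d^s(v)\geq 14$ at every vertex, which rules out $d(v)\leq 6$ and reduces $d(v)=7$ to the tight case $d^s(v)=7$, then find a rainbow $K_4$ below $v$ and extend by $v$ to a rainbow $K_5$. The only difference is that the paper locates the rainbow $K_4$ in the $8$-vertex graph $G-v$ (using the threshold $46$), whereas you first peel off the non-neighbor $u$ and locate it in the $7$-vertex graph $G-\{u,v\}$ (using the threshold $35$); your version is in fact the cleaner one, since it guarantees a priori that $v$ is adjacent to all four vertices of the $K_4$, a point the paper's write-up glosses over.
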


\begin{proof}
We follow the same idea as the proof of Lemma \ref{BaseCaseLemma11a}. It is worth noting that for a graph $H$ with 8 vertices, if $e(H)+c(H)\geq 51$, then $H$ must contain a rainbow $K_5$. Similarly, if $e(H)+c(H)\geq 46$, then $H$ must contain a rainbow $K_4$.

{\bf Case 1}. There is a vertex $d(v)\leq 6$, implying $d^s(v)\leq 6$.

In this case, we observe that $e(G-v)+c(G-v)\geq 52$. By Theorem \ref{ExistClique}, there is a rainbow $K_5$ in $G-v$.

Consequently, all vertices in $G$ have degrees of at least $7$.

{\bf Case 2}. There is a vertex $v$ with $d(v)=7$ and $d^s(v)\leq 6$.
In this case, we similarly observe that $e(G-v)+c(G-v)\geq 51$. By Theorem \ref{ExistClique}, there is a rainbow $K_5$ in $G-v$.

{\bf Case 3}. There is a vertex $v$ with $d(v)=7$ and $d^s(v)=7$.

In this case, we find that $e(G-v)+c(G-v)=50$. According to  Theorem \ref{ExistClique}, there is a rainbow $K_4$ in $G-v$. Therefore, together with vertex $v$, there exists a rainbow $K_5$.

By eliminating all the cases mentioned above, we can  conclude that $G$ is complete.

\end{proof}

\begin{lemma}\label{BaseCaseLemma10a}
If $K_8$ is an edge-colored complete graph with $17$ colors and does not contain a rainbow $K_4$, then it must contain a rainbow $T_{8,2}$ as a subgraph.
\end{lemma}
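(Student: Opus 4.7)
The plan is to mirror the saturation-degree analyses of Lemmas~\ref{K6_two_ways} and~\ref{BaseCaseLemma11a}, isolate the structure of the unique multi-edge color class, and then finish with Mantel's theorem.

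The first step is to restrict the color-type vector $(c_2,c_1,c_0)$. Applying Theorem~\ref{ExistClique} to $G-v$ gives $d^s(v)\ge t_{8,2}-t_{7,2}=4$ at each vertex, so $\sum_v d^s(v)\ge 32$; combined with $\sum_v d^s(v)=2c_2+c_1\le 2c(G)=34$ and $c_0+c_1+c_2=17$, only four triples remain. The triples $(17,0,0)$ and $(16,1,0)$ are incompatible with the edge count $e(G)=28$ (they would require $17$ edges in total or a star of size $12>7$). The only subtle case to kill is $(15,2,0)$: the two stars must have sizes summing to $13$, so one of them has all $7$ edges at its center $x$, forcing every edge at $x$ to share a single color and hence $d^s(x)=1<4$, a contradiction. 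What remains is $(c_2,c_1,c_0)=(16,0,1)$: there are $16$ single-edge colors and one special color $c^*$ whose edge-set $H^*$ has $12$ edges.

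The second step is to pin down $H^*$. Since $c^*$ is the only repeating color, the identity $d^s(v)=7-\deg_{H^*}(v)\ge 4$ forces $\deg_{H^*}(v)\le 3$, and the equality $\sum_v\deg_{H^*}(v)=24$ then forces $H^*$ to be $3$-regular. The no-rainbow-$K_4$ hypothesis says that every $4$-vertex subset has at most $5$ distinct colors, and since only $c^*$ can repeat, every $4$-subset must contain at least two edges of $H^*$. Equivalently, the complement $\bar H^*$ in $K_8$ is $4$-regular with $16$ edges, and every $4$-vertex subset induces at most $4$ edges of $\bar H^*$.

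The third and main step is to prove that $\bar H^*$ is triangle-free. If a triangle $uvw$ existed in $\bar H^*$, the $12$ incidences at $\{u,v,w\}$ would split as $6$ inside the triangle and $6$ going to the $5$ outside vertices; but the at-most-$4$-edges-per-$4$-subset condition forces each outside vertex $x$ to be joined to at most one of $u,v,w$ (else $\{u,v,w,x\}$ would already contain $\ge 5$ edges of $\bar H^*$), giving at most $5$ outgoing edges — contradiction. By Mantel's theorem the only triangle-free graph on $8$ vertices with $16$ edges is $K_{4,4}$, so $\bar H^*=K_{4,4}$ and hence $H^*=K_4\cup K_4$. The two $K_4$-components of $H^*$ then form the bipartition of a $T_{8,2}$, and its $16$ crossing edges are precisely the edges of the $16$ single-edge colors, yielding a rainbow $T_{8,2}$. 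The only nontrivial step is the triangle-freeness argument, which is nevertheless a short double-count.
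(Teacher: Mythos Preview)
Your proof is correct. The setup---the $(c_2,c_1,c_0)$ analysis leading to the unique feasible triple $(16,0,1)$, and the conclusion that the repeated color class $H^*$ is $3$-regular with every $4$-subset containing at least two $H^*$-edges---is essentially the same as the paper's, though you eliminate the infeasible triples by explicit edge-counting whereas the paper does it by deleting appropriate vertices and invoking Theorem~\ref{ExistClique}.

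The genuine difference is the endgame. The paper fixes $X=\{x_1\}\cup N_{H^*}(x_1)$, sets $t=e(H^*[X])$, and rules out $t\in\{3,4,5\}$ by a careful case analysis examining several specific $4$-tuples, eventually forcing $H^*[X]=H^*[Y]=K_4$. You instead pass to the complement $\bar H^*$, observe that the ``at least two $H^*$-edges in every $4$-set'' condition becomes ``at most four $\bar H^*$-edges in every $4$-set,'' and use a clean double-count to show $\bar H^*$ is triangle-free; Mantel's theorem then gives $\bar H^*=K_{4,4}$ immediately. Your route is shorter and more conceptual, and it makes transparent why the extremal structure appears (it is exactly the Mantel extremal graph); the paper's route is more self-contained in that it does not appeal to an outside theorem.
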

\begin{proof}
Assuming that $G=K_8$ does not contains a rainbow $K_4$ and is edge-colored by $t_{8,2}+1=17$ colors, we can apply the  inequality~\ref{sumofds2}, which gives us
\begin{align*}
c_2+c_1+c_0 =17,
32\leq 2c_2+c_1\leq 34.
\end{align*}
The solutions to the above inequalities are as follows:
\begin{align*}
(c_2,c_1,c_0)=(17,0,0),(16,1,0),(16,0,1),(15,2,0).
\end{align*}
If $c_1\leq 3$ and $c_0=0$, we can delete three appropriate vertices, and $G$ will still contain a rainbow $K_4$. As a result, the only feasible solution for the values of $(c_2,c_1,c_0)$ is $(16,0,1)$. In this solution, out of 17 colors, 16 colors appear only once, and all the other edges share with a new color, denoted as color 0. According to  (\ref{sumofds2}), we have $d^s(v)=4$ for every vertex $v\in V(G)$, meaning that each vertex is incident to 4 edges uniquely colored. Out of these, 3 edges have the new color 0.  
For convenience,  let's consider  the subgraph $H$ of $G$ whose edges are those with color 0 in $G$, and $V(H)=V(G)$. It is important to note that $d_H(v)=3$ for any $v\in V(H)$. 
Since $G$ dose not contain a rainbow $K_4$, we conclude that
$e(H[T])\geq 2$ for each $4$-tuple $T$ of vertices in $V(H)$.
Let $X=\{x_1,x_2,x_3,x_4\}=\{x_1\}\cup N(x_1)$ represent the subset of $V(H)$, and let $Y=\{y_1,y_2,y_3,y_4\}$ be the complement of $X$ in $V(H)$.
Suppose that $H[X]$ has $t(\geq 3)$ edges. In this case, $H[Y]$ also has $t$ edges, as every vertex $v\in V(G)$ is incident to three edges colored with $0$.

We assert that $t=6$. To support this claim, we examine the following cases:
\begin{enumerate}
\item
If $t=5$, we assume that $x_2x_3\notin E(H)$. As a consequence, we have  $N(x_4)=\{x_1,x_2,x_3\}$.
For any $i,j$, when we examine the  $4$-tuple $\{x_1,x_4,y_i,y_j\}$, it becomes evident that $y_iy_j\in E(H)$.
This implication leads us to the conclusion that $H[Y]$ contains 6 edges, which ultimately results in a contradiction;
\item
If $t=4$, we assume that $x_2x_3,x_3x_4\notin E(H)$. Then $x_3$ is adjacent to two vertices in $Y$. For the sake of simplicity, let's assume that $x_3y_3$ and $x_3y_4$ are edges in $H$. By considering the $4$-tuple $\{x_1,x_3,y_1,y_2\}$, we observe that  $y_1y_2\in E(H)$. Upon examining all  $4$-tuples $\{x_1,y_i,y_j,y_k\}$ for all $i,j,k$, it becomes clear that there are 2 edges in $\{y_i, y_j, y_k\}$. Consequently, $H[Y]$ forms a cycle  $C_4$. Without loss of generality, let the edges be $y_1y_2$, $y_2y_3$, $y_3y_4$, $y_1y_4$. 
Examining the $4$-tuple $\{x_3,x_4,y_1,y_2\}$, we deduce that one of the edges $x_4y_1$ or $x_4y_2$ must be present in $E(H)$; let's assume it is  $x_4y_1$ without loss of generality.
However, upon considering the subgraph $H[x_3,x_4,y_1,y_3]$, we observe that it contains only a single edge, which leads to a contradiction;
\item
If $t=3$, each of the vertices $x_2$, $x_3$ and $x_4$ is adjacent to two vertices in $Y$.
Without loss of generality, let's assume that $x_2y_1$ and $x_2y_2$ are edges in $H$. Considering the 4-tuple $\{x_1,x_2,y_3,y_4\}$, we discover that $y_3y_4\in E(H)$.  
Further investigation of the 4-tuple $\{x_1,y_1,y_2,y_3\}$ reveals that the set  $\{y_1,y_2,y_3\}$ contains at least two edges.
Therefore, due to the constraint of $t=3$, we deduce that $y_1y_4$ and $y_2y_4$ are not present in $E(H)$.  However, when considering the subgraph 
$H[x_1,y_1,y_2,y_4]$, it becomes apparent that it can have at most one edge, leading to a contradiction.
\end{enumerate}
From the above cases, we can conclude that $H$ is a disjoint union of two copies of $K_4$. Returning to the original graph $G$, we find that the remaining edges, which are not colored with $0$, form a rainbow $T_{8,2}$.
\end{proof}

\begin{lemma}\label{BaseCaseLemma10b}
If $K_9$ is edge-colored with $28$ colors and does not contain a rainbow $K_5$, then $K_9$ contains a rainbow $T_{9,3}$ as a subgraph.
\end{lemma}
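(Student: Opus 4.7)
The plan is to follow the structure of Lemma~\ref{BaseCaseLemma10a} almost verbatim, adapted to $n=9$ and $k=5$. Setting $c(G)=t_{9,3}+1=28$, inequality~(\ref{sumofds2}) combined with $c_2+c_1+c_0=28$ and $\sum_{v}d^s(v)=2c_2+c_1$ forces
$$54\le 2c_2+c_1\le 56,$$
whose non-negative integer solutions are $(c_2,c_1,c_0)\in\{(28,0,0),(27,1,0),(26,2,0),(27,0,1)\}$.

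I would first rule out the three triples with $c_0=0$. Since $K_9$ has $36$ edges, the $c_1$ colors must collectively carry $36-c_2$ edges, each forming a monochromatic star. In $(28,0,0)$ and $(27,1,0)$ this already demands a single star with at least $9$ edges, which does not fit in $K_9$. For $(26,2,0)$, the bounds pin $d^s(v)=6$ at every vertex. For a non-center vertex $v$, the only edges of $v$ that can bear a star color are the two edges to the two star centers, so $d^s(v)$ equals $8$ minus the number of stars of which $v$ is a leaf. The constraint $d^s(v)=6$ then forces every non-center vertex to be a leaf of \emph{both} stars, so each of the two stars has at least $7$ leaves; this gives $a+b\ge 14$, contradicting $a+b=36-26=10$.

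This leaves $(27,0,1)$: a ``new'' color $0$ is carried by the $9$ edges of a subgraph $H\subseteq K_9$, while the remaining $27$ edges are colored distinctly. The identity $d^s(v)=8-d_H(v)=6$ makes $H$ a $2$-regular graph on $9$ vertices, hence a vertex-disjoint union of cycles of total length $9$: one of $3C_3$, $C_3\cup C_6$, $C_4\cup C_5$, or $C_9$. As in the $K_8$ argument, the absence of a rainbow $K_5$ forces $e(H[T])\ge 2$ for every $5$-subset $T\subseteq V(G)$, since any $5$-clique containing at most one color-$0$ edge would be rainbow. A short case check exhibits a $5$-subset $T$ with $e(H[T])\le 1$ in each of the three bad possibilities: in $C_9$, five alternating vertices leave a single edge; in $C_4\cup C_5$, two non-adjacent vertices of $C_4$ together with three vertices of $C_5$ spanning exactly one edge; in $C_3\cup C_6$, two vertices of $C_3$ together with an independent triple in $C_6$. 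Thus $H=3C_3$, and the $27$ complementary edges $K_9\setminus H$ form the required rainbow $T_{9,3}$.

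The main obstacle is the $(26,2,0)$ case, where one must carefully track which edges at each vertex can carry which star color in order to reach the $a+b\ge 14$ contradiction. The rest is a routine enumeration that closely mirrors the template of Lemma~\ref{BaseCaseLemma10a}.
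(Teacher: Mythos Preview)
Your proof is correct and follows essentially the same route as the paper: reduce to $(c_2,c_1,c_0)=(27,0,1)$, identify the color-$0$ subgraph $H$ as $2$-regular, and rule out all cycle decompositions except $3C_3$ via the $e(H[T])\ge 2$ constraint. The one difference is in eliminating the $c_0=0$ triples: the paper handles all of them in one line by observing that when $c_1\le 4$ and $c_0=0$, deleting the (at most $c_1$) star centers leaves a rainbow complete graph on at least $5$ vertices, hence a rainbow $K_5$---so the case you flag as ``the main obstacle'' is in fact immediate, and your edge-counting argument for $(26,2,0)$, while valid, is unnecessary.
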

\begin{proof}
Assuming $G=K_9$ does not contains a  rainbow $K_5$ and is edge-colored by $t_{9,3}+1=28$ colors,
by inequality~\ref{sumofds2}, we can observe the following inequalities: 
\begin{align*}
c_2+c_1+c_0=28,
54\leq 2c_2+c_1\leq 56.
\end{align*}
The solutions to these inequalities are:
\begin{align*}
(c_2,c_1,c_0)=(28,0,0),(27,1,0),(27,0,1),(26,2,0).
\end{align*}
If $c_1\leq 4$ and $c_0=0$, we can delete appropriate vertices and still retain a rainbow $K_5$ in $G$. 
There is only one feasible solution $(27,0,1)$ for $(c_2,c_1,c_0)$, as shown in the proof of Lemma \ref{BaseCaseLemma10a}. This feasible solution implies that $d^s(v)=6$ for all $v\in V(G)$. Let's assume that the color not saturated by any vertex is color $0$. Consider a subgraph $H$ of $G$ with the same vertex set and edges colored with 0. Note that $d_H(v)=2$ for any $v\in V(H)$
Since $G$ contains no rainbow $K_5$, it follows that $e(H[T])\geq 2$ for each $5$-tuple $T$ of $V(H)$.
$H$ is the union of disjoint cycles, as  every $v\in V(G)$ incident to exactly two edges colored with $0$. We will now  eliminate the following cases:
\begin{enumerate}
\item
if $H=C_9=v_1v_2v_3v_4v_5v_6v_7v_8v_9v_1$, then $H[v_1,v_2,v_4,v_6,v_8]$ has only one edge $v_1v_2$;
\item
if $H=C_6\cup C_3=v_1v_2v_3v_4v_5v_6v_1\cup v_7v_8v_9v_7$, then $H[v_1,v_3,v_5,v_6,v_8]$ has only one edge $v_5v_6$;
\item
if $H=C_5\cup C_4=v_1v_2v_3v_4v_5v_1\cup v_6v_7v_8v_9v_6$, then $H[v_1,v_3,v_4,v_6,v_8]$ has only one edge $v_3v_4$.
\end{enumerate}
Thus $H$ is the union of three disjoint cycles $C_3$.
Back to the original graph, we obtain a rainbow $T_{9,3}$ in $G$.
\end{proof}

\section{Counting Rainbow Cliques}

In this section we prove Theorem \ref{main1}.  Let $V_1$ and $V_2$ denote two sets of vertices in $G$. We define the bipartite graph with bipartition $V_1$ and $V_2$ as $G[V_1,V_2]$, and the induced subgraph with vertex set $V_1\cup V_2$ as $G[V_1\cup V_2]$. Additionally, $e(V_1,V_2)$ and $c(V_1,V_2)$ represent the number of edges and colors in $G[V_1,V_2]$, respectively. 

\begin{proof} Fix $\varepsilon >0$. For any subgraph $F$ of $G$ with $m$ vertices, where $m<n$ is also sufficiently large, if $e(F)+c(F)\geq (1+\frac{k-3}{k-2}+\varepsilon){m\choose 2}$, then by Theorem \ref{ExistClique}, $F$ contains a rainbow clique. Then we consider all $m$-vertex spanning subgraphs $M$ in $G$. The subgraphs that satisfy $e(M)+c(M)\geq (1+\frac{k-3}{k-2}+\varepsilon){m\choose 2}$ and provide a rainbow clique are referred to as the {\it good} ones. Let $\eta\in (0,1)$ be a value such that there are $\eta{n\choose m}$ good ones, while the non-good ones are $(1-\eta){n\choose m}$.

Now we consider $\sum_{|M|=m}(e(M)+c(M))$. Firstly, since we count every edge $e\in M$ for ${n-2 \choose m-2}$ times, so we have
\begin{equation}
\sum_{|M|=m}e(M)=e(G){n-2\choose m-2}.
\end{equation}
Secondly, each color appeared in $\sum_{|M|=m}c(M)$ is counted for at least ${n-2\choose m-2}$ times. So we have
\begin{equation}
\sum_{|M|=m}c(M)\geq c(G){n-2\choose m-2}.
\end{equation}
Therefore, we have
\begin{eqnarray}
\sum_{|M|=m}(e(M)+c(M))&\geq & (e(G)+c(G)){n-2\choose m-2}.\label{eMcM}\\
&=& (1+\frac{k-3}{2(k-2)}+2\varepsilon){n\choose 2}{n-2\choose m-2}
\end{eqnarray}
The left side is at most
\begin{equation}
\eta{n\choose m}\cdot 2\cdot {m\choose 2} + (1-\eta) {n\choose m}{m\choose 2}(1+\frac{k-3}{k-2}+\varepsilon)
\end{equation}
So
\begin{eqnarray}
1+\frac{k-3}{k-2}+2\varepsilon &\leq &  2\eta+(1-\eta)(1+\frac{k-3}{k-2}+\varepsilon)\\
& = & 2\eta +(1+\frac{k-3}{k-2}+\varepsilon)-\eta(1+\frac{k-3}{k-2}+\varepsilon)
\end{eqnarray}
Therefore, $\eta\geq \frac{\varepsilon}{1-\frac{k-3}{k-2}-\varepsilon}$. On the other hand, each rainbow clique $K_k$ is contained in at most ${n-k\choose m-k}$ many $m$-sets. Therefore, there exists a $\delta$ such that the number of rainbow clique $K_k$ is at least
\begin{equation}
\frac{\eta {n\choose m}}{{n-k \choose m-k}}=\frac{\eta {n\choose k}}{{m \choose k}}=\delta n^k
\end{equation}
\end{proof}

\begin{Remark}
In \cite{xu2020properly}, the authors give a condition of the existence of properly colored $C_4$ in terms of $e(G)+c(G)$. That is, if $e(G)+c(G)\geq {n\choose 2}+n+1$, then $G$ contains a properly colored $C_4$. We can use the same technique in the above proof to count the number of properly colored $C_4$ ($\Omega(n^3)$), when $e(G)+c(G)\geq (1+\varepsilon){n\choose 2}$, where $\varepsilon>0$.
\end{Remark}

The following result improves Proposition 12 in \cite{ehard2020rainbow} for $k\geq 5$ and $\ell=2$. It becomes evident that for $k\geq 5$, the likelihood of having exactly one rainbow clique $K_k$ is extremely low. 
\begin{theorem}
Let $G$ be an edge-colored graph on $n$ vertices
and $e(G)+c(G)\geq \binom n2 +t_{n,k-2}+2$.
If $n> k\geq 6$, or $k= 5$ and $n\geq 10$,
then $G$ contains at least two rainbow cliques $K_k$.
\end{theorem}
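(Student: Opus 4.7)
The approach would be a proof by contradiction combined with induction on $n$. Assume $G$ contains exactly one rainbow $K_k$, on vertex set $S$; the induction base cases are $n=k+1$ for $k\ge 6$ and $n=10$ for $k=5$.

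First I would establish that every edge $e\in E(K_k[S])$ has a color appearing on no other edge of $G$. If not, $G-e$ satisfies $e(G-e)+c(G-e)\ge \binom{n}{2}+t_{n,k-2}+1$: strict inequality produces a second rainbow $K_k$ via Theorem~\ref{ExistClique}, while equality forces $G-e$ to be complete by the extremal characterisation (Theorem~\ref{main2} for $k=5,\,n\ge 9$; the Ehard--Mohr analogue for $k\ge 6,\,n>k$), contradicting $e(G-e)<\binom{n}{2}$. Running the same dichotomy with $e\in E(K_k[S])$ now having a unique color --- so that $e(G-e)+c(G-e)=e(G)+c(G)-2$ --- forces the equality $e(G)+c(G)=\binom{n}{2}+t_{n,k-2}+2$.

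Next I would bound $d(v)+d^s(v)\ge n+(t_{n,k-2}-t_{n-1,k-2})$ for every $v\in V(G)$. For $v\in S$, the only rainbow $K_k$ is destroyed in $G-v$, so Theorem~\ref{ExistClique}'s contrapositive gives $e(G-v)+c(G-v)\le \binom{n-1}{2}+t_{n-1,k-2}+1$, whence the bound. For $v\notin S$, the rainbow $K_k$ on $S$ persists in $G-v$; if $e(G-v)+c(G-v)\ge \binom{n-1}{2}+t_{n-1,k-2}+2$, the induction hypothesis yields a second rainbow $K_k$ in $G-v\subseteq G$, contradicting uniqueness. Summing over $v$ and using $\sum_v d^s(v)\le 2c(G)$ gives
\[
2\Bigl(\binom{n}{2}+t_{n,k-2}+2\Bigr)\;\ge\; n\Bigl[n+\bigl(t_{n,k-2}-t_{n-1,k-2}\bigr)\Bigr].
\]
A short computation using the recursion~(2.3) shows this inequality fails whenever $k\ge 6$ and $n>k$, or $k=5$ and $n\ge 11$, completing the induction step.

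The base case $k\ge 6,\,n=k+1$ is direct: the hypothesis forces $G$ to be either a rainbow $K_{k+1}$ (yielding $k+1$ rainbow $K_k$'s) or $K_{k+1}$ with one pair of like-colored edges (yielding at least three rainbow $K_k$'s by case analysis). The main obstacle is the borderline base case $k=5,\,n=10$, where the summation inequality above is tight. There I would first strengthen the bound to $d(v)+d^s(v)\ge 17$ for $v\in S$: the case $d(v)+d^s(v)=16$ would make $G-v$ extremal on $9$ vertices, so Theorem~\ref{main2} describes it as a rainbow $T_{9,3}$ plus three monochromatic parts of size three, and pigeonhole places two vertices of $S\setminus\{v\}$ in a common part, making the corresponding $K_k[S]$-edge inherit that part's repeated intra-part color and contradicting the unique-color property. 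Combining with $d(v)+d^s(v)\ge 15$ for $v\notin S$ (obtained by applying the same edge-deletion dichotomy to $G-v$), the sum is forced to equal $5\cdot 17+5\cdot 15 = 160 = 2(e(G)+c(G))$; equality in $\sum_v d^s(v)\le 2c(G)$ then forces every color onto a single edge, so $d^s(v)=d(v)$ and $d(v)+d^s(v)=2d(v)$, yet the value $17$ for $v\in S$ is odd --- the final contradiction.
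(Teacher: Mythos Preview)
Your approach is quite different from the paper's: the paper gives a direct counting argument, letting $A$ be the vertex set of the unique rainbow $K_k$ and $B=V(G)\setminus A$, then bounding $e(G)+c(G)$ from above via $e(A)+c(A)+e(A,B)+c(A,B)+e(B)+c(B)$ using the key observation that each $u\in B$ can contribute at most $k-2$ new colors toward $A$ (else a second rainbow $K_k$ appears). Splitting into $n<2k$ and $n\geq 2k$ and doing the arithmetic yields a contradiction. Your inductive scheme, which leans on the extremal characterisation and a saturated-degree summation, is a nice alternative idea and your treatment of the base cases (including the delicate $k=5$, $n=10$ case) is correct.

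However, there is a genuine gap in the induction step for $k\geq 6$. Your assertion that ``a short computation shows the inequality
\[
2\Bigl(\binom{n}{2}+t_{n,k-2}+2\Bigr)\ \geq\ n\bigl[n+(t_{n,k-2}-t_{n-1,k-2})\bigr]
\]
fails whenever $k\geq 6$ and $n>k$'' is not true. Writing $n=q(k-2)+r$ with $0\le r\le k-3$, one finds that the right side minus the left equals $n-4$ when $r=0$ and $r(q+1)-4$ when $r\geq 1$. In particular, for $r=1$ and $q=2$ (i.e.\ $n=2k-3$) the difference is $-1$, and for $r=1$, $q=3$ (i.e.\ $n=3k-5$) it is $0$. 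Concretely, at $k=6$, $n=9$ one gets $136\geq 135$; at $k=6$, $n=13$ one gets $286\geq 286$. So the summation argument alone does \emph{not} produce a contradiction at these two values of $n$, and your induction step breaks there. The gap is repairable by exactly the trick you already use for $k=5$, $n=10$: if $d(v)+d^s(v)$ hits the lower bound for some $v\in S$, then $G-v$ is extremal on $n-1$ vertices, hence (by the Ehard--Mohr characterisation) complete with a rainbow $T_{n-1,k-2}$ and a single extra color on all intra-part edges; pigeonhole puts two vertices of $S\setminus\{v\}$ in one part, contradicting the unique-color property of $K_k[S]$-edges. This strengthens the bound for $v\in S$ by $1$ and then the sum exceeds $2(e(G)+c(G))$ by at least $k-1>0$. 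You should add this patch explicitly for $n\in\{2k-3,\,3k-5\}$ rather than claim the bare inequality suffices.
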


\begin{proof}
Let's suppose that there is exactly one rainbow $K_k$ in the graph.
Consider $n=s(k-2)+i$, where $0\leq i\leq k-3$. After some straightforward calculations, we obtain the following inequality:
\begin{align}\label{e+c}
e(G)+c(G)\geq
&\binom n2 +\frac{(k-3)(n^2-i^2)}{2(k-2)} +\binom i2 +2=n^2-\frac{(1+s)(n+i)}2+2
\end{align}
Let $A$ denote the vertex set of the rainbow clique $K_k$ and $B=V(G)\setminus A$.
We define $c(A,T)$ to represent the number of new colors of edges between set $A$ and $T\subseteq B$, which are distinct from those in the rainbow $K_k$.
It follows that $c(A,u)\leq k-2$ for any $u\in B$, as exceeding this value would result in the existence of another rainbow $K_k$. We can further divide this into two into 2 cases.

{\bf Case 1. } $n<2k$.

Note that in this case, $s<3$. To see why, suppose the contrary, then we would have: 
\begin{equation}
	s(k-2)+i<2k
\end{equation}
This implies $k< \frac{2s}{s-2}\leq 6$, which leads to a contradiction.

For the cases when $s=1$ or $s=2$, we can sum up the counts of edges and colors in $G[A]$, $G[B]$ and $G[A,B]$, resulting in:
\begin{align}
e(G)+c(G)
\leq & e(A)+c(A)+e(A,B)+c(A,B)+e(B)+c(B) \\
\leq & 2\binom k2 +k(n-k)+\sum_{u\in B}c(A,u) +2\binom {n-k}2 \\
\leq & 2\binom k2 +(2k-2)(n-k)+2\binom {n-k}2 \\
= & n^2-3n+2k.
\end{align}
Combining this with the lower bound~\ref{e+c} and substituting $n=s(k-2)+i$,
$$
(s^2-5s+4)k \geq 2s^2-10s-2is+4i+4.
$$
For the case of $s=1$, we have $i\leq 2$. However, since $n>k$, we must have $i\geq 3$, leading to a  contradiction.
For the case of $s=2$, we find:
\begin{align}
k\leq \frac{2s^2-10s-2is+4i+4}{s^2-5s+4}
= 4,
\end{align}
which contradicts the fact that $k\geq 5$.

{\bf Case 2. $n\geq 2k$.}

Note that in this case, we have $s\geq \frac{2k-i}{k-2}$. As $G[B]$ does not include a rainbow $K_k$, we can deduce $e(B)+c(B)\leq \binom{n-k}2+t_{n-k,k-2}+1$ based on Theorem~\ref{ExistClique}.
Let $s'$ and $i'$ be integers such that $n-k=s'(k-2)+i'$.Consequently, we obtain the following inequality
\begin{align}\label{e+c2}
n^2-\frac{(1+s)(n+i)}2+2\leq &
e(G)+c(G) \leq e(A)+c(A)+e(A,B)+c(A,B)+e(B)+c(B) \\
\leq & 2\binom k2 +(2k-2)(n-k)+(n-k)^2-\frac{(1+s')(n-k+i')}2+1.
\end{align}
There are two cases regarding the relationship between $s$ and $s'$, depending on the value of $i$:
\begin{enumerate}
\item If $i\leq 1$, then we have $s'=s-2$ and  $i'=i+k-4$. Substituting these values into the inequality~\ref{e+c2}, we have
$(s-1)k\leq 4s-3$;
\item If $2\leq i\leq k-3$, then we have $s'=s-1$ and $i'=i-2$. Substituting these values into the inequality~\ref{e+c2}, we have
$(s-1)k\leq 4s-i-1\leq 4s-3$.
\end{enumerate}

Both cases imply that
\begin{equation}
k\leq \frac{4s-3}{s-1}\leq5
\end{equation}
However, when $k=5$ and $s=2$, we have $n\geq 10$ and $i\geq 4$, which contradicts $i\leq k-3$. Consequently, we find that $k\leq 4$, leading to a contradiction.
\end{proof}

\section{Extremal Graphs without $K_4$ and $K_5$}

The following lemmas have motivations similar to those of Lemma 9, 10 and 11 in \cite{ehard2020rainbow}. Theorem \ref{main2} follows immediately from Lemma \ref{similarLemma9}, \ref{similarLemma10} and \ref{similarLemma11}.

\begin{lemma}\label{similarLemma9}
Let $G$ be a complete graph on $n$ vertices, edge-colored without a rainbow $K_k$, where $k=4$ and $n\geq 6$, or $k=5$ and $n\geq 9$. If $c(G)=t_{n,k-2}+1$ and $G$ contains a rainbow $T_{n,k-2}$ as a subgraph, then all edges within the partite sets of the rainbow $T_{n,k-2}$ have the same color, which is different from the colors used on the rainbow $T_{n,k-2}$.
\end{lemma}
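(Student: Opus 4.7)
Plan. The strategy is to show that every within-edge must carry the unique extra color $c^*$. Let $C$ denote the palette of $t_{n,k-2}$ colors appearing on the rainbow $T_{n,k-2}$; since $c(G)=t_{n,k-2}+1$ and the cross-edges exhaust $C$, the extra color $c^*$ can only appear on within-edges, and every within-edge is colored from $C\cup\{c^*\}$. Suppose for contradiction that some within-edge $uv\in V_i$ has color $c\in C$, and let $xy$ (with $x\in V_p$, $y\in V_q$, $p\neq q$) be the unique cross-edge of the rainbow $T_{n,k-2}$ with $c(xy)=c$.

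I extend $\{u,v\}$ to a $k$-clique whose remaining $k-2$ vertices lie in the other parts and avoid $\{x,y\}$; this avoidance is possible because every part has size at least $3$. In the resulting clique all cross-edges carry pairwise distinct colors from $C$ (by the rainbow $T_{n,k-2}$) and none coincides with $c$, since $xy$ has been excluded. Consequently the only way for this $K_k$ not to be rainbow is a color-collision involving a within-edge of the clique other than $uv$. For $k=4$, the clique contains exactly one such extra within-edge $ab\in V_{\bar i}$, and the no-rainbow-$K_4$ condition forces $c(ab)\in\{c,c(ua),c(ub),c(va),c(vb)\}$; in particular, $c(ab)\neq c^*$. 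Running the same argument starting from any within-edge $u'v'$ of color $c^*$ (which must exist, since $c^*$ is in the palette) yields the complementary constraint $c(ab)\in\{c(u'a),c(u'b),c(v'a),c(v'b)\}$. Since cross-edge colors from disjoint vertex pairs are distinct (rainbow $T_{n,k-2}$), the intersection of these two constraint-sets is empty whenever $\{x,y\}$ is disjoint from $\{u,v,u',v'\}$, contradicting that $c(ab)$ must lie in both. For $k=5$, I use $5$-cliques with vertex distribution $(2,2,1)$ across the three parts, namely $u,v\in V_i$ together with a pair in one other part and a single vertex in the third, and run the analogous argument.

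The boundary sub-cases where $x$ or $y$ coincides with one of $u,v,u',v'$ are resolved by re-selecting either $u',v'$ or the auxiliary vertices $a,b$ so that $\{x,y\}$ falls outside the current $k$-clique; this is possible provided each relevant part has at least $3$ vertices, which is secured by the hypothesis $n\geq 6$ for $k=4$ and $n\geq 9$ for $k=5$. The very smallest cases $n=6$ ($k=4$) and $n=9$ ($k=5$) are already covered by Lemma~\ref{K6_two_ways} and Lemma~\ref{BaseCaseLemma10b}, whose proofs explicitly exhibit the within-edges as disjoint monochromatic triangles all colored with the single extra color. The main obstacle is keeping the case analysis tight enough: one must simultaneously track the position of $xy$ relative to $V_i$ and to the chosen auxiliary vertices, and verify that every configuration either produces a rainbow $K_k$ or reduces to one of the resolved base cases.
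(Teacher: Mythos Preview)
Your outline differs substantially from the paper's proof, which for $k=4$ carries out an exhaustive colour-by-colour case analysis on the two triangles $\{v_1,v_2,v_3\}$ and $\{v_1',v_2',v_3'\}$, and for $k=5$ reduces to the $k=4$ statement by proving a claim that $G[V_1\cup V_j]$ contains no rainbow $K_4$.

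There is, however, a genuine gap in your argument. After fixing the offending within-edge $uv\in V_i$ with $c(uv)=c\in C$ and deducing the constraint $c(ab)\in\{c,c(ua),c(ub),c(va),c(vb)\}$, you invoke a second clique $\{u',v',a,b\}$ built from a $c^*$-coloured within-edge $u'v'$. For the resulting constraint to concern the \emph{same} edge $ab$ you need $u',v'\in V_i$ as well, and for the two constraint sets to be disjoint you additionally need $\{u,v\}\cap\{u',v'\}=\emptyset$ (otherwise $c(ua)=c(u'a)$, say, lies in both). Neither requirement is guaranteed: the colour $c^*$ might appear only on within-edges of the other part(s), or only on edges of $V_i$ incident to $u$ or $v$. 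Your write-up never addresses this; the phrase ``any within-edge $u'v'$ of colour $c^*$'' tacitly assumes it lives in $V_i$ and is disjoint from $\{u,v\}$, and your boundary discussion only treats coincidences with $\{x,y\}$, not with $\{u,v\}$.

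A related issue is the handling of small $n$. You delegate $n=6$ (for $k=4$) to Lemma~\ref{K6_two_ways} and $n=9$ (for $k=5$) to Lemma~\ref{BaseCaseLemma10b}, but those lemma \emph{statements} do not assert the monochromaticity of the within-edges; you are relying on details internal to their proofs. More seriously, your main argument still needs $|V_i|\geq 4$ to permit a pair $u',v'\in V_i$ disjoint from $\{u,v\}$, which fails for $n=7$ when $uv$ sits in the part of size~$3$; yet $n=7$ is not covered by any base-case lemma in the paper. So as written, the proposal does not close for $k=4$, $n=7$, and the general step has a hole whenever the $c^*$-edge refuses to land in $V_i\setminus\{u,v\}$.
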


\begin{proof} We divide the proof into two parts. First, when $k=4$ and $n\geq 6$, let $G$ be an edge-colored complete graph with $c(G)=t_{n,2}+1$. Suppose the colors $1,2,\ldots, t_{n,2}$ are used to color the rainbow subgraph $T_{n,2}$. Therefore, at least one edge in a partite set $V_1$ or $V_2$ is colored with a new color, say color 0. To complete the proof, we need to show that all the edges within each  partite set are colored with 0. Otherwise, there is a rainbow $K_4$. Note that in each partite set, there are at least 3 vertices, which we call them $v_1,v_2,v_3,\cdots, \in V_1$ and $v_1',v_2',v_3', \cdots, \in V_2$.

Without loss of generality, we can suppose the color of $v_iv_j'=3(i-1)+j$, where $i,j=1,2,3$. We can see that  $G[\{v_1,v_2,v_3\}]$  and $G[\{v_1',v_2',v_3'\}]$ are not rainbow triangles; otherwise, there would be a rainbow $K_4$. Since at least one edge in the partite $V_1$ and $V_2$ is colored by 0, let's assume $v_1v_2$ is colored by 0. Then the candidate colors for $v_1'v_2'$, such that there is no rainbow $K_4$, are $\{0,1,2,4,5\}$. By the same argument, we have $c(v_1'v_3')\in\{0,1,3,4,6\}$ and $c(v_2'v_3')\in\{0,2,3,5,6\}$.

Suppose $c(v_1'v_2')=0$, then $c(v_2v_3)\in\{0,4,5,7,8\}$ and $c(v_1v_3)\in\{0,1,2,7,8\}$. If $c(v_2v_3)=0$, then $c(v_2'v_3')\in\{0,2,3,5,6\}\cap\{0,5,6,8,9\} =\{0,5,6\}$ and $c(v_1'v_3')\in\{0,1,3,4,6\}\cap\{0,4,6,7,9\} =\{0,4,6\}$. If $c(v_2'v_3')=0$, then the candidate colors for $v_1v_3$ are $\{0,1,2,7,8\}\cap \{0,2,3,8,9\}=\{0,2,8\}$. If $c(v_1v_3)=0$, then the candidate colors of $v_1'v_3'$ are $\{0,1,3,7,9\}\cap\{0,4,6\}$. So the only possible color of $v_1'v_3'$ is 0.

So now let's go back one step. If $c(v_1v_3)=2$, then $c(v_1'v_3')\in\{2,1,3,7,9\}\cap \{0,4,6\}$, leading to a contradiction. The same argument goes with $c(v_1v_3)=8$.

Similarly, if $c(v_2'v_3')=5$, then $c(v_1v_3)\in\{0,1,2,7,8\}\cap\{5,2,3,8,9\}$. If $c(v_1v_3)=2$ or 8, we encounter  the same contradiction for $v_1'v_3'$. The same argument holds for $c(v_2'v_3')=6$.

If $c(v_2v_3)=4$, then $c(v_1v_3)\in\{0,4\}$ and
$c(v_2'v_3')\in\{0,2,3,5,6\}\cap\{4,5,6,8,9\}\cap\{0,4,2,3,8,9\}$, leading to a contradiction. Similarly, if  $c(v_2v_3)=5$, then $c(v_1v_3)\in\{0,5\}$ and $c(v_1'v_3')\in\{0,1,3,4,6\}\cap\{5,4,6,7,9\}\cap \{0,5,1,3,7,9\}$, leading to a contradiction.

If $c(v_2v_3)=7$, then $c(v_1v_3)\in\{0,7\}$ and  $c(v_2'v_3')\in\{0,2,3,5,6\}\cap\{7,5,6,8,9\}\cap\{0,7,2,3,8,9\}$, leading to a contradiction. Similarly, if  $c(v_2v_3)=8$, then $c(v_1v_3)\in\{0,8\}$, and $c(v_1'v_3')\in\{0,1,3,4,6\}\cap\{8,4,6,7,9\}\cap \{0,8,1,3,7,9\}$, leading to a contradiction.

If $c(v_1'v_2')\neq 0$, by symmetry, we can assume $c(v_1'v_2')=1$. Then $c(v_2v_3)\in\{1,4,5,7,8\}$.
If $c(v_2v_3)=1$, then $c(v_2'v_3')\in\{0,2,3,5,6\}\cap\{1,5,6,8,9\}=\{5,6\}$. If $c(v_2'v_3')=5$, then $c(v_1v_3)\in\{5,2,3,8,9\}$, which leads to a contradiction because $c(v_1v_3)$ can only be 0 or 1. The same argument applies when $c(v_2'v_3')=6$.

If $c(v_2v_3)=4$, then $c(v_1v_3)\in\{0,4\}$, and
$c(v_2'v_3')\in\{0,2,3,5,6\}\cap\{4,5,6,8,9\}\cap\{0,4,2,3,8,9\}$, which also leads to a contradiction.
The same argument holds for $c(v_2v_3)=5$, $7$ or $8$.

When $k=5$ and $n\geq 9$, let $G$ be an edge-colored complete graph with a rainbow $T_{n,3}$ as a subgraph, where each partite set contains at least $3$ vertices. Similar to the case when $k=4$, we denote the vertices in the three partite sets as follows:  $v_1^1,v_2^1,v_3^1, \ldots, \in V_1$, $v_1^2,v_2^2,v_3^2, \ldots, \in V_2$ and $v_1^3,v_2^3,v_3^3, \ldots, \in V_3$ with $n_1=|V_1|$, $n_2=|V_2|$ and $n_3=|V_3|$. Note that $c(T_{n,3})=t_{(n_1+n_2),2}+t_{(n_1+n_3),2}+t_{(n_2+n_3),2}$. We introduce an additional color, labeled as 0, which does not appear in the rainbow $T_{n,3}$. Suppose that at least one edge in $G$ is colored with $0$. Specifically, we assume that $c(v_1^1v_2^1)=0$, which implies that $c(G[V_1\cup V_j])\geq t_{(n_1+n_j),2}+1$ for $j=2$ or 3.

{\bf Claim} There is no rainbow $K_4$ in $G[V_1\cup V_j]$ for $j=2$ or 3.

{\it Proof of the Claim. }Let's begin by examining the subgraph $G[V_1\cup V_2]$. If there is a rainbow $K_4$ in $G[V_1\cup V_2]$, we have the following cases.

{\bf Case 1}: If the rainbow $K_4$ in $G[V_1\cup V_2]$ contains at most two colors that also appear in $G[V_1, V_3]$ or $G[V_2,V_3]$, the pigeonhole principle  guarantees the presence of a vertex in $V_3$ such that a rainbow $K_5$ can be formed by combining this vertex with the aforementioned rainbow $K_4$.

{\bf Case 2}: If at least three colors in this rainbow $K_4$ are present in the union of $G[V_1,V_3]$ and $G[V_2,V_3]$, then all of these edges must belong to $G[V_1]$. To illustrate, if one of them were in $G[V_2]$, we could combine it with $v^1_1v^1_2$ to construct a new rainbow $K_4$, satisfying Case 1. Consequently, all edges in $G[V_2]$ are colored with 0 or with colors that already appear in $G[V_1,V_2]$. Returning to the original rainbow $K_4$, we encounter two sub-cases:

{\it Sub-case 1. } If three vertices of the rainbow $K_4$ are in $V_1$ and the remaining vertex is in $V_2$, we consider the vertices as $v_i^1,v_j^1,v_k^1\in V_1$ and $v_{\ell}^2\in V_2$. Let's assume that the edges $v_i^1v_j^1$, $v_i^1v_k^1$, and $v_j^1v_k^1$ are colored with colors present in either $G[V_1,V_3]$ or $G[V_2,V_3]$. Now, we can choose any edge adjacent to $v_{\ell}^2$, say $v_{\ell}^2v_{p}^2$, and combine it with two of the vertices $v_i^1,v_j^1,v_k^1\in V_1$ to form a new rainbow $K_4$. This newly formed rainbow $K_4$ satisfies the conditions in Case 1.

{\it Sub-case 2. } If all vertices of the rainbow $K_4$ reside in $G[V_1]$ and if at least three edges have colors not present in $G[V_2]$, we can select an edge from $G[V_2]$. By combining this selected edge with one of the edges from the precious rainbow $K_4$, we create a new rainbow $K_4$, effectively returning us to Case 1.

In both of these cases, we are able to construct a rainbow $K_5$, implying that $c(G[V_1\cup V_2])=t_{(n_1+n_2),2}+1$.

We can apply a similar seasoning to the subgraph $G[V_1\cup V_3]$. {\it End of the Proof of the Claim.}

According to the Claim, the subgraph $G[V_1\cup V_j]$ forms a complete graph without a rainbow $K_4$, and $c(G[V_1\cup V_j])=t_{(n_1+n_j),2}+1$. Furthermore, it contains a rainbow $T_{(n_1+n_j),2}$, for $j=2$ and 3. By the first part of the proof for $k=4$, we conclude that the subgraphs  $G[V_i]$, with $i=1,2,3$, are colored uniformly with the same color, distinct from those used in the rainbow $T_{n,3}$.

\end{proof}

\begin{lemma}\label{similarLemma10}
Let $G$ be a complete graph with $n$ vertices, edge-colored with $t_{n,k-2}+1$ colors, where $k=4$ and $n\geq 8$, or $k=5$ and $n\geq 9$. If $G$ does not contain a rainbow $K_k$, then $G$ contains a rainbow $T_{n,k-2}$ as a subgraph.
\end{lemma}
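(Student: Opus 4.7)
The plan is to proceed by induction on $n$, with base cases $n=8$ for $k=4$ and $n=9$ for $k=5$ supplied by Lemma~\ref{BaseCaseLemma10a} and Lemma~\ref{BaseCaseLemma10b}, respectively.

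For the inductive step, assume the conclusion holds for $n-1$ and let $G=K_n$ satisfy $c(G)=t_{n,k-2}+1$ with no rainbow $K_k$. First I would apply Lemma~\ref{ResultFromLemma11} to produce a vertex $v$ with $e(G-v)+c(G-v)=\binom{n-1}{2}+t_{n-1,k-2}+1$; the completeness of $G-v$ then forces $c(G-v)=t_{n-1,k-2}+1$, and the inductive hypothesis furnishes a rainbow $T_{n-1,k-2}$ inside $G-v$ with parts $V_1,\ldots,V_{k-2}$. Because $n-1\geq 6$ when $k=4$ and $n-1\geq 9$ when $k=5$, Lemma~\ref{similarLemma9} applies to $G-v$ and forces all intra-part edges of this rainbow Tur\'an subgraph to share one common color, which I denote by~$0$.

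It remains to insert $v$ into one of $V_1,\ldots,V_{k-2}$ so that the enlarged structure is a rainbow $T_{n,k-2}$. From $d^s(v)=c(G)-c(G-v)=t_{n,k-2}-t_{n-1,k-2}$ and the estimates of the preliminary section, the number of edges $vu$ whose color is either~$0$ or a rainbow color of $T_{n-1,k-2}$ is at most $(n-1)-d^s(v)$, which is essentially the size of a single part. The two obstructions to the placement $v\in V_i$ are (i) an edge $vu$ with $u\notin V_i$ whose color already appears in the rainbow $T_{n-1,k-2}$, and (ii) two edges $vu,vu'$ with $u,u'\notin V_i$ sharing a color (including color~$0$ used twice outside $V_i$). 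The key step is to show that the vertices responsible for either obstruction must all sit in one common part, which then plays the role of $V_{i^\star}$.

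To force this clustering I would exploit the absence of a rainbow $K_k$ in $G$ against carefully chosen small subgraphs. For each obstruction of type~(i), with $c(vu)=c(xy)$ for the unique $xy\in T_{n-1,k-2}$ of that color, I would adjoin $v,u,x,y$ with $k-4$ additional vertices drawn one from each of the remaining parts and examine the induced $K_k$; the rainbowness of $T_{n-1,k-2}$ and the intra-part color~$0$ determine most of its colors, so its failure to be rainbow forces $u$ to share a part with $x$ or with $y$, pinning it inside a specific part. A parallel argument handles obstructions of type~(ii). The chief difficulty is precisely this case analysis: the $K_k$-gadgets for $k=4$ (using the bipartite $T_{n-1,2}$) and for $k=5$ (using the tripartite $T_{n-1,3}$) are structurally different, and in the borderline sizes $n=9$ for $k=4$ and $n=10$ for $k=5$ the parts are only just big enough to supply the needed auxiliary vertices, which makes the final check delicate.
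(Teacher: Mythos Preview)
Your inductive skeleton matches the paper's proof exactly: same base cases via Lemma~\ref{BaseCaseLemma10a} and Lemma~\ref{BaseCaseLemma10b}, same appeal to Lemma~\ref{ResultFromLemma11} to extract a vertex $v$ with $d^s(v)=t_{n,k-2}-t_{n-1,k-2}$, same use of the inductive hypothesis together with Lemma~\ref{similarLemma9} to get the rigid structure of $G-v$.

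Where you diverge is in the final step of placing $v$. You set up a dichotomy of ``obstructions'' (old rainbow colors reused, and repeated colors among cross edges) and propose to cluster them into a single part by building $K_k$-gadgets, anticipating a delicate case split between $k=4$ and $k=5$ and worrying about borderline sizes. The paper does the complementary and much shorter thing: it looks at the $d^s(v)=t_{n,k-2}-t_{n-1,k-2}$ edges at $v$ carrying \emph{new} colors, notes that this number is exactly the total size of the $k-3$ larger parts of $T_{n-1,k-2}$, and asserts that the absence of a rainbow $K_k$ forces all of these new-colored edges to land in those larger parts. Hence $v$ slots into the smallest part and the rainbow $T_{n,k-2}$ is immediate; no separate treatment of $k=4$ versus $k=5$ and no borderline analysis is needed.

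So your plan is not wrong, but it is the long way around. The single clean observation you are missing is that counting new-colored edges (rather than cataloguing obstructions among old-colored ones) matches up numerically with the larger parts, which collapses your two obstruction types and your anticipated case analysis into one line.
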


\begin{proof}
We will prove the lemma by induction on $n$.
The lemma is true for the base cases where $k=4$ and $n=8$, as shown in Lemma~\ref{BaseCaseLemma10a}, and for $k=5$ and $n=9$, as shown in Lemma~\ref{BaseCaseLemma10b}. Now, let us assume that the lemma holds for graphs with fewer than $n$ vertices
(with $n\geq 9$ for $k=4$ and $n\geq 10$ for $k=5$).
From Lemma~\ref{ResultFromLemma11}, we ascertain the existence of a vertex $v\in V(G)$ with $d^s(v)=t_{n,k-2}-t_{n-1,k-2}$ and $c(G-v)=t_{n-1,k-2}+1$.
By the induction hypothesis, we know that $G-v$ contains a rainbow $T_{n-1,k-2}$.
Given that $n/(k-2)\geq 3$, according to Lemma~\ref{similarLemma9}, all edges within the partite sets of the rainbow $T_{n-1,k-2}$ share a common color distinct from those in the rainbow $T_{n-1,k-2}$. We can partition the vertices of  $G-v$ into $k-2$ partite sets, where edges within each partite set are colored by the same color. Subsequently, we reintroduce the vertex $v$ back into $G-v$. 
Furthermore, with the knowledge that $d^s(v)=t_{n,k-2}-t_{n-1,k-2}$, it follows that there exist $t_{n,k-2}-t_{n-1,k-2}$ edges colored distinctly by new colors, which are absent in $G-v$. As there is no rainbow $K_k$ in $G$, these $t_{n,k-2}-t_{n-1,k-2}$ edges must all be distributed to the $k-3$ larger partite sets. Consequently, $v$ must remain within the smallest partite sets, leading to the formation of a rainbow $T_{n,k-2}$. Thus, the proof is complete.
\end{proof}

\begin{lemma}\label{similarLemma11}
If $G$ is an edge-colored graph on $n$ vertices such that $e(G)+c(G)={n \choose 2}+t_{n,k-2}+1$, where $k=4$ and $n\geq 8$ (or $n\geq 9$ if $k=5$). If $G$ does not contain a rainbow $K_k$, then $G$ is complete.
\end{lemma}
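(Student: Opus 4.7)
The plan is to prove the lemma by induction on $n$, with base cases $n=8$ for $k=4$ and $n=9$ for $k=5$ handled by Lemmas~\ref{BaseCaseLemma11a} and~\ref{BaseCaseLemma11b}, respectively. For the inductive step (so $n\geq 9$ if $k=4$, or $n\geq 10$ if $k=5$), Lemma~\ref{ResultFromLemma11} supplies a vertex $v$ with $e(G-v)+c(G-v)=\binom{n-1}{2}+t_{n-1,k-2}+1$. Since $G-v$ still contains no rainbow $K_k$, the inductive hypothesis yields that $G-v$ is complete; then $c(G-v)=t_{n-1,k-2}+1$, so Lemma~\ref{similarLemma10} supplies a rainbow $T_{n-1,k-2}$ in $G-v$ with parts $V_1,\ldots,V_{k-2}$, and Lemma~\ref{similarLemma9} forces every within-part edge to share a single new color, call it $0$, distinct from the $t_{n-1,k-2}$ colors of that rainbow $T_{n-1,k-2}$.

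It remains to show $v$ is adjacent to every other vertex. Suppose, toward a contradiction, that $d(v)\leq n-2$. Comparing $e(G)+c(G)$ with $e(G-v)+c(G-v)$ gives $d(v)+d^s(v)=(n-1)+(t_{n,k-2}-t_{n-1,k-2})$, so $d^s(v)\geq 1+(t_{n,k-2}-t_{n-1,k-2})$; this is $\geq 1+\lfloor n/2\rfloor$ for $k=4$ and of order $2n/3$ for $k=5$. I then aim to build a rainbow $K_k$ containing $v$ as follows: choose $w_1,w_2$ from one part $V_j$ together with one vertex from each remaining part. The induced $K_{k-1}$ on $\{w_1,\ldots,w_{k-1}\}$ uses color $0$ on $w_1w_2$ and $\binom{k-1}{2}-1$ pairwise distinct colors from the rainbow $T_{n-1,k-2}$; if the edges $vw_1,\ldots,vw_{k-1}$ can moreover be arranged to carry $k-1$ pairwise distinct \emph{saturated} colors at $v$, then, since saturated colors are absent from $G-v$ and hence disjoint from $\{0\}\cup T_{n-1,k-2}$, the whole $K_k$ is rainbow, contradicting the hypothesis.

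The main obstacle is producing such a selection, which reduces to a system-of-distinct-representatives problem. For each $i$, let $C_i$ denote the set of distinct saturated colors that appear on edges from $v$ into $V_i$; one has $d^s(v)=|\bigcup_i C_i|\leq\sum_i|C_i|$ and $|C_i|\leq|V_i\cap N(v)|\leq|V_i|\leq\lceil(n-1)/(k-2)\rceil$. The lower bound on $d^s(v)$ rules out any $C_i$ being empty (otherwise $d^s(v)$ would exceed $\sum_{i'\neq i}|V_{i'}|$) and forces some $|C_j|\geq 2$. For $k=4$, the estimate $|C_1|+|C_2|\geq 1+\lfloor n/2\rfloor\geq 5$ for $n\geq 8$ forces some $|C_j|\geq 3$; then taking $\alpha\in C_j\setminus C_{3-j}$ when $|C_{3-j}|\leq 2$ (possible since $|C_j|\geq 3>|C_{3-j}|$), or any $\alpha\in C_j$ when $|C_{3-j}|\geq 3$, followed by $\beta\in C_j\setminus\{\alpha\}$ and $\gamma\in C_{3-j}\setminus\{\alpha,\beta\}$, yields the required SDR. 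For $k=5$ a saturated color may lie in up to three $C_i$'s, but the profiles $(|C_1|,|C_2|,|C_3|)$ compatible with $|\bigcup_i C_i|\geq 1+(t_{n,3}-t_{n-1,3})$ and $|C_i|\leq\lceil(n-1)/3\rceil$ form a short list, and in each profile an explicit SDR can be exhibited. Carefully verifying this bookkeeping, particularly for $k=5$ where the interplay between the three $C_i$'s is more subtle, is the technical core that closes the argument.
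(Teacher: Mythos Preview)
Your overall strategy matches the paper's exactly: induction on $n$ with base cases handled by Lemmas~\ref{BaseCaseLemma11a} and~\ref{BaseCaseLemma11b}, the vertex $v$ supplied by Lemma~\ref{ResultFromLemma11}, the structure of $G-v$ obtained from the inductive hypothesis together with Lemmas~\ref{similarLemma10} and~\ref{similarLemma9}, and then the identity $d(v)+d^s(v)=(n-1)+(t_{n,k-2}-t_{n-1,k-2})$ forcing $d(v)=n-1$ once one rules out $d^s(v)\geq t_{n,k-2}-t_{n-1,k-2}+1$.

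The only divergence is in how you carry out that last step, and here you make it harder than necessary. Your SDR formulation on the color sets $C_i$ is workable but delicate: your $k=4$ selection order has a small slip (if $|C_{3-j}|=1$ and its sole element coincides with your $\beta$, no valid $\gamma$ remains; picking $\gamma\in C_{3-j}$ first and then $\alpha,\beta\in C_j\setminus\{\gamma\}$ repairs this), and the $k=5$ case is left as unperformed ``bookkeeping'' whose profile list in fact depends on $n$. The paper bypasses all of this with one clean observation: since $G$ is simple, the $d^s(v)$ distinct saturated colors are carried by $d^s(v)$ distinct edges at $v$ and hence land on $d^s(v)$ distinct neighbors of $v$. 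Because $d^s(v)\geq (n-1)-\lfloor (n-1)/(k-2)\rfloor+1$, the complement of this neighbor set inside $V(G-v)$ has size at most $\lfloor (n-1)/(k-2)\rfloor-1$, strictly smaller than every part of the rainbow $T_{n-1,k-2}$; so these neighbors meet every part, and since there are more than $k-2$ of them, some part is met twice. Picking two such neighbors from that part and one from each remaining part gives $k-1$ edges at $v$ with pairwise distinct saturated colors, and the induced $K_{k-1}$ on those vertices is automatically rainbow (one edge of color $0$, the rest from the rainbow $T_{n-1,k-2}$), yielding the forbidden rainbow $K_k$ uniformly for $k=4,5$ with no case analysis.
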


\begin{proof}
For $n\geq 8$, $k=4$ (or $n\geq 9$ if $k=5$), we prove the lemma by induction on $n$.
For $k=4$ and $n=8$, or $k=5$ and $n=9$, we have the conclusion by Lemma~\ref{BaseCaseLemma11a} and Lemma~\ref{BaseCaseLemma11b}.
Suppose that the lemma is true for graphs with fewer than $n$ vertices ($n\geq 9$ if $k=4$ and $n\geq 10$ if $k=5$).
By Lemma~\ref{ResultFromLemma11}, there is a vertex $v$ in $G$ such that $e(G-v)+c(G-v)={n-1 \choose 2}+t_{n-1,k-2}+1$. Therefore, we have
\begin{align}\label{dds}
d(v)+d^s(v)=n-1+t_{n,k-2}-t_{n-1,k-2}.
\end{align}
By the induction hypothesis, $G-v$ is complete. 
Using Lemma~\ref{similarLemma9} and Lemma~\ref{similarLemma10},
we conclude that $G-v$ contains a rainbow $T_{n-1,k-2}$, and all edges within the partite sets of the rainbow $T_{n-1,k-2}$ are colored with the same color which is not used on the rainbow $T_{n-1,k-2}$.
If $d^s(v)\geq t_{n,k-2}-t_{n-1,k-2}+1$, then $v$ connects to all partite sets of the rainbow $T_{n-1,k-2}$ by the pigeonhole principle.
By taking one vertex adjacent to $v$ from each partite set,
along with an extra vertex from the rainbow $T_{n-1,k-2}$ (the existence is guaranteed by the size of each partite set), we can obtain a rainbow $K_k$, which contradicts the assumption.
Thus, we have $d^s(v)\leq t_{n,k-2}-t_{n-1,k-2}$.
Combining this inequality with Equation~\ref{dds}, we conclude that $d(v)=n-1$, and therefore $G$ is complete.
The proof is finished.
\end{proof}

\begin{Remark}
Together with Remark \ref{CounterExample} and the example in the concluding remark of \cite{ehard2020rainbow}, our result in Theorem \ref{main2} is tight.
\end{Remark}

\section*{ACKNOWLEDGMENTS}
We thank Maurício Collares Neto for useful discussions. 
Danni Peng is supported by CNPq (Grant Number  141537/2023-0).  CNPq is the national Council for Scientific and Technological Development of Brazil. 

\section*{Appendix}\label{appendix}
\begin{proof}[Proof of Lemma \ref{noK3}]
We will prove by using induction on the number of vertices of $G$.  When $n=3$,  $e(G)+c(G)=\frac{3\cdot 4}{2}-1=5$. Clearly, it follows that $e(G)=3$ and then $c(G)=2$. Then $G=K_3$ and $c(i,j)=i$, if $i<j$.

Now suppose the conclusion is true for $n-1$. When $G$ has $n$ vertices, we need the following claim.

{\bf Claim.} Denote by $d^s(v)$ the number of saturated colors at $v$, there exists a vertex $v$ such that $d(v)+d^s(v)\leq n$.
\begin{proof}
If not, then
\begin{equation}\label{claim}
\sum_{v\in V(G)}(d(v)+d^s(v))\geq n(n+1)
\end{equation}
 Now, for any color, it is saturated by less than three vertices. So $\sum_{v\in V(G)}d^s(v)\leq 2c(G)$. Therefore, the left hand of \ref{claim} satisfies
\begin{equation}
\sum_{v\in V(G)}(d(v)+d^s(v))\leq 2e(G)+2c(G)=n(n+1)-2
\end{equation}
which is a contradiction.
\end{proof}
Following the claim, we can find a vertex $v$ and let $G'=G-v$. Then
\begin{equation}
e(G')+c(G')\geq \frac{n(n+1)}{2}-1-n=\frac{n(n-1)}{2}-1
\end{equation}
Since $G'$ has no rainbow triangle, we have $e(G')+c(G')=\frac{n(n-1)}{2}-1$. By induction hypothesis, we have $G'$ is $K_{n-1}$, with $c(i,j)=i$, when $i<j$, $i,j=1,\cdots n-1$.  Let $V(G')=\{v_1,v_2\ldots, v_{n-1}\}$.

Now we add the vertex $v$ back to $G'$. Note that $d(v)+d^s(v)=n$. However, $d^s(v)$ cannot be larger than 1, because if two new colors are saturated at $v$, we can construct a rainbow triangle by choosing one edge in each color and an edge between them. Therefore, $d^s(v)=1$ and $d(v)=n-1$, which proves that $G=K_n$.  Let {\it red} be the new color saturated at $v$.

Suppose the red star has $k$ leaves. If $k=1$, say, $vv_{\ell}$ is red, we can assign the colors for the rest of edge as  $c(vv_{\ell'}=\ell'$, for $\ell'<l$ and $c(vv_{\ell'})=\ell$. Clearly, there is no rainbow triangle and by a simple rename of the vertices, it can be seen that the color function holds immediately.

For the case $k>1$, we need the following claim.

{\bf Claim} If $k>1$, $vv_{n-1}, vv_{n-2},\ldots,vv_{n-k}$ are red.

\begin{proof}
Suppose $vv_p$ and $vv_q$, $p<q$, are red and $vv_{p+1}$ is not red. To make sure that $vv_pv_{p+1}$ is not a rainbow triangle, $c(vv_{p+1})=c(v_pv_{p+1})=p$. On the other side, $vv_{p+1}v_{q}$ is not a rainbow triangle. $c(vv_{p+1})=c(v_{p+1}v_q)=p+1$, which gives a contradiction. With similar argument, $vv_{n-1}$ is red because otherwise $vv_pv_{n-1}$ and $vv_qv_{n-1}$ are rainbow triangles. So  $vv_{n-1}, vv_{n-2},\ldots,vv_{n-k}$ are red.
\end{proof}

Following the claim, we assign $c(v_iv)=i$, $i<n-k$ and finish the proof.
\end{proof}

\bibliographystyle{plain}
\addcontentsline{toc}{chapter}{Bibliography}
\bibliography{rainbow_clique}
\end{document}